\newenvironment{poliabstract}[1]
  {\begin{abstract}}
  {\end{abstract}}
\numberwithin{equation}{section}
\newtheorem{theorem}{Theorem}[section]
\newtheorem{lemma}[theorem]{Lemma}
\newtheorem{proposition}[theorem]{Proposition}
\newtheorem{rem}[theorem]{Remark}
\renewcommand{\ge}{\geq}
\renewcommand{\le}{\leq}
\newcommand{\ind}{\mathbf{1}}
\newcommand{\R}{\mathbb{R}}
\newcommand{\N}{\mathbb{N}}
\renewcommand{\tilde}{\widetilde}
\renewcommand{\hat}{\widehat}
\newcommand{\cB}{{\ensuremath{\mathcal B}} }
\newcommand{\bP}{{\ensuremath{\mathbf P}} }
\newcommand{\bE}{{\ensuremath{\mathbf E}} }
\DeclareMathSymbol{\leqslant}{\mathalpha}{AMSa}{"36} 
\DeclareMathSymbol{\geqslant}{\mathalpha}{AMSa}{"3E} 
\DeclareMathSymbol{\eset}{\mathalpha}{AMSb}{"3F}     
\renewcommand{\leq}{\;\leqslant\;}                   
\renewcommand{\geq}{\;\geqslant\;}                   
\newcommand{\dd}{\,\text{\rm d}}             
\newcommand{\bbE}{{\ensuremath{\mathbb E}} }
\newcommand{\bbN}{{\ensuremath{\mathbb N}} }
\newcommand{\bbP}{{\ensuremath{\mathbb P}} }
\newcommand{\bbR}{{\ensuremath{\mathbb R}} }
\newcommand{\gb}{\beta}
\newcommand{\gga}{\gamma}            
\newcommand{\gd}{\delta}
\newcommand{\gep}{\varepsilon}       
\newcommand{\gD}{\Delta}
\newcommand{\go}{\omega}
\newcommand{\gl}{\lambda}
\def\captionfont@{\footnotesize}
\def\captionheadfont@{\scshape}
\long\def\@makecaption#1#2{%
  \vspace{2mm}
  \setbox\@tempboxa\vbox{\color@setgroup
    \advance\hsize-6pc\noindent
    \captionfont@\captionheadfont@#1\@xp\@ifnotempty\@xp
        {\@cdr#2\@nil}{.\captionfont@\upshape\enspace#2}%
    \unskip\kern-6pc\par
    \global\setbox\@ne\lastbox\color@endgroup}%
  \ifhbox\@ne 
    \setbox\@ne\hbox{\unhbox\@ne\unskip\unskip\unpenalty\unkern}%
  \fi
  \ifdim\wd\@tempboxa=\z@ 
    \setbox\@ne\hbox to\columnwidth{\hss\kern-6pc\box\@ne\hss}%
  \else 
    \setbox\@ne\vbox{\unvbox\@tempboxa\parskip\z@skip
        \noindent\unhbox\@ne\advance\hsize-6pc\par}%
\fi
  \ifnum\@tempcnta<64 
    \addvspace\abovecaptionskip
    \moveright 3pc\box\@ne
  \else 
    \moveright 3pc\box\@ne
    \nobreak
    \vskip\belowcaptionskip
  \fi
\relax
}
\def\writefig#1 #2 #3 {\rlap{\kern #1 truecm
\raise #2 truecm \hbox{#3}}}
\newcommand{\var}{{\rm Var}}
\begin{document}
 
 \title[Brownian Motion in Correlated Poissonian Potential]{Superdiffusivity for Brownian Motion in a Poissonian Potential with long range correlation I: \\
 Lower bound on the volume exponent}
\author{Hubert Lacoin}
\address{CEREMADE, Place du Mar\'echal De Lattre De Tassigny
75775 PARIS CEDEX 16 - FRANCE}
\email{lacoin@ceremade.dauphine.fr}
\maketitle

\selectlanguage{english}
\begin{poliabstract}{Abstract}
We study trajectories of $d$-dimensional Brownian Motion in Poissonian potential up to the hitting time of a distant hyper-plane. 
Our Poissonian potential $V$ is constructed from a field of traps whose centers location is given by a Poisson Point process and whose radii are IID distributed with a common distribution that has unbounded support; it has the particularity of having long-range correlation.
 We focus on the case where the law of the trap radii $\nu$ has power-law decay and prove that superdiffusivity hold under certain condition, and get a lower bound on 
 the volume exponent. Results differ quite much with the one that have been obtained for the model with traps of bounded radii by W\"uhtrich \cite{cf:W2, cf:W3}: the superdiffusivity phenomenon is enhanced by the presence of correlation.\\
   2000 \textit{Mathematics Subject Classification: 82D60, 60K37, 82B44}
  \\
  \textit{Keywords: Streched Polymer, Quenched Disorder, Superdiffusivity, Brownian Motion, Poissonian Obstacles, Correlation.}
\end{poliabstract}

\selectlanguage{frenchb}
\begin{poliabstract}{Résumé}
Dans cet article, nous étudions les trajectoires 
d'un mouvement brownien dans $\bbR^d$ évoluant dans un potentiel poissonien jusqu'au temps d'atteinte d'un hyper-plan situé loin  de l'origine. 
Le potentiel poissonien $V$ que nous considerons est construit à partir d'un champs de pièges dont les centres 
sont déterminés par un processus de Poisson et dont les rayons sont des variables aléatoires IID.
Nous concentrons notre étude sur le cas particulier ou la loi des rayons des pièges à une queue polynomiale et nous prouvons 
que les trajectoires ont un caractère surdiffusif quand certaines conditions sont vérifées et nous donnons une borne inférieure pour l'exposant de volume.
Les résultats sont sensiblement différents de ceux obtenus dans le cas ou les pièges sont à rayon bornés par   W\"uhtrich \cite{cf:W2, cf:W3}:
le phénomène de surdiffusivité est renforcé par la présence de corrélations.
\end{poliabstract}

\selectlanguage{english}

\section{Introduction}

\subsection{Brownian Motion and Poissonian Traps}

This paper studies a model of Brownian Motion in a random potential. Given a random function $V$ defined on $\bbR^d$ and $\gl, \gb > 0$ ($\gb$ being the inverse temperature)
we study trajectories of a Brownian motion $(B_t)_{t\ge 0}$  killed at (space-dependent) rate $\gb(\gl+V(B_t))$ conditioned to survive up to the hitting time of a distant hyperplane.
\medskip

The potential $V$ that we considered is buildt from a Poisson Point Process on $\bbR^d\times \bbR^+$ with intensity 
$\mathcal L\times \nu$ where $\mathcal L$ is the Lebesgue measure, 	and $\nu$ is a  probability measure. We call it $\go:=\{(\go_i,r_i) \ i\in \mathbb N\}$.
The definition of $V$ is  $V(\cdot):= \sum_{i\in \N} (r_i)^{-\gamma} W((r_i)^{-1}(\cdot -\go_i))$ where $W$ is a non-negative function 
with compact support (for the sake of simplicity
we restrict ourselves to $W=\ind_{B(0,1)}$, where $B(0,1)$ denotes the euclidian ball). The potential can be seen as a superposition of traps centered on the points $\go_i$,
and with IID random radii $r_i$. 
We are specifically interested in the case where $\nu$ has unbounded support and a tail with power law decay.

\medskip

This model is very similar to the ones studied in   \cite{cf:SCPAM, cf:SAOP, cf:W1, cf:W2, cf:W3} (see also the monograph of Sznitman \cite{cf:S} for a full acquaintance with the subject),
the only difference is that we allow the traps to have random radii.
 The crucial difference is that the potential we consider has long-range spacial correlation i.e.\ that the value of $V$ at two distant point are not independent but have correlation that decays like a power of the distance.
Another situation where one has a correlated potential $V$  is when $W$ is not compactly supported (see e.g. \cite{cf:DV,cf:Pa}) but this out of our scope.

\subsection{Superdiffusivity and volume exponent}

A typical trajectory of a Brownian Motion killed with homogeneous rate $\gl$ and conditioned to survive till it hits a distant hyperplane looks like the following:\\
The motion along the direction that is orthogonal to the hyperplane (call it $e_1$) is ballistic (with speed $1/\sqrt{2\gl}$) but  the motion along the $d-1$ other coordinate
is diffusive, and for that reason trajectories tend to stay in a tube centered on the axis $\bbR e_1$ of diameter $\sqrt{L}$ where $L$ is the distance between the motions starting point and  the hyperplane that has to be hit.

\medskip

Adding a non homogenous term to the killing rate makes the problem much harder to analyze and changes this behavior in some cases:
physicists predicts that when $\gb$ is large (at low temperature) or when $d< 4$ for every $\gb$, transversal fluctuation of the trajectories are superdiffusive\ i.e. 
of an amplitude $L^\xi$ for some $\xi\in (1/2,1)$ that is called \textit{the volume exponent}. The aim of the paper 
is to show that spatial correlation in $V$
enhances that phenomenon.

\section{Model and results}

\subsection{Model}

Let us make formal the definition we gave for the model.
We consider 
\begin{equation}\label{degome}
 \go:= \{ (\go_i,r_i) \ | i\in \N\} 
\end{equation}
 a Poisson Point Process in $\R^d \times \R^+$  (we index the points in the Poisson Point Process in an arbitrary deterministic way,  e.g. such that $|\go_i|$ is an increasing sequence, $|\cdot|$ being the euclidian norm on $\bbR^d$) whose intensity is given by $\mathcal L \times \nu$ where $\mathcal L$ 
is the Lebesgue measure on $\bbR^d$ and $\nu=\nu_{\alpha}$  is the probability measure on $\R^+$ defined by
\begin{equation}
\forall r\ge 1,\  \nu([r,\infty])=r^{-\alpha},
\end{equation}
for some $\alpha>0$ (which is a parameter of the model). We denote by $\bbP$, and $\bbE$ its  associated probability law and expectation. 
 
 \medskip

 Given $\gamma>0$, let $V^{\go}$,  $\bbR^d\to \bbR_+$ be defined as 
\begin{equation}
 V^{\go}(x):=\sum_{i=1}^{\infty} r_i^{-\gamma} \ind_{\{|x-\go_i|\le r_i\}}.
\end{equation}

Note that  $V^{\go}(x)<\infty$, for almost every realization of $\go$ and for every $x$ if and only if the condition
\begin{equation}
\alpha+\gga-d>0
\end{equation}
is fulfilled, and we always consider it to be so in the sequel.
This construction is natural way  to get a potential with long range correlation that decays like a power of the distance constructed from a Poisson Point Process.
Indeed with this setup,
\begin{equation}
 \bbE[V^{\go}(0)V^{\go}(x)]\asymp x^{d-\alpha-\gga}.
\end{equation}
\medskip

Given $x\in \bbR^d$, let $\bP_x$ (and $\bE_x$ the associated expectation) denote the law  $B=(B_t)_{t\ge 0}$, standard Brownian Motion
starting from $x$ and set $\bP:=\bP_0$.
Given $L>0$ set 
\begin{equation}
\mathcal H_L:= \{L\}\times \bbR^{d-1}
\end{equation}

\medskip

Given any closed set $A\subset \bbR^d$ let $T_A$ denote the hitting time of $A$.
Given $\gl>0,\gb>0$, 
the probability for a Brownian Motion killed with rate $\gb(V+\gl)$ to survive till it hits  $\mathcal H_L$ is equal to

\begin{equation}
 Z_{L}^{\go}:= \bE\left[\exp\left(-\int_{0}^{T_{\mathcal H_L}} \gb( V^{\go}(B_t)+\gl)\dd t\right)\right].
\end{equation}
The law of the trajectories conditioned to survival $\mu_L^{\go}$ is given by
\begin{equation}\label{defmu}
 \frac{\dd \mu_L^{\go}}{\dd \bP}(B):= 
\frac{1}{Z^\go_L}\exp\left(-\int_{0}^{T_{\mathcal H_L}} \gb(V^{\go}(B_t)+\gl)\dd t\right) .
\end{equation}
In what follows,  we consider only the case $\gb=1$ as temperature does not play any role in our results.

\medskip


\subsection{Review of known results}

Let us turn to a rigorous definition of the volume exponent.
For $\xi>0$ one defines $\mathcal C_L^{\xi}$ the be a tube of cubic section, of width $L^\xi$ and centered on $\bbR e_1$, where $e_1=(1,0,\dots,0)$.

\begin{equation}\label{clxi}
\mathcal C_L^{\xi}:= \R \times [-L^{\xi}/2,L^{\xi}/2]^{d-1},
\end{equation}
and the event
\begin{equation}
\mathcal A_L^{\xi}:= \{ B \ | \ \forall  t \in [0, T_{\mathcal H_L}], A_t \in \mathcal C_L^{\xi}\}.
\end{equation}
In words, $A_L^\xi$ is the event:``$B$ has transversal fluctuation of amplitude less than $L^\xi$".

We define the volume exponent as

\begin{equation}\label{defxiz}
\xi_0:=\sup \{\xi>0 \ | \ \lim_{L\to\infty} \bbE\left[\mu_L(\mathcal A_L^\xi)\right]=0\}.
\end{equation}

It is expected to coincide with 

\begin{equation}
\xi_1:=\inf \{\xi>0 \ | \ \lim_{L\to\infty} \bbE\left[\mu_L(\mathcal A_L^\xi)\right]=1\}.
\end{equation}

In particular if $V\equiv 0$ one has $\xi_1=\xi_0=1/2$.

\medskip

Let us recall what are the conjecture and known result for the volume exponent for the model of Brownian Motion in Poissonian Obstacles studied in
\cite{cf:SCPAM, cf:SAOP, cf:S, cf:W1, cf:W2, cf:W3} and for related model. In the remainder of this section, $\xi_0$ relates more to the general notion of volume exponent
that to the strict definition given above and in the different results that are cited, definitions may differ:

When $d\ge 4$ whether $\xi_0>1/2$ or not should depend on the temperature i.e.\ on the value of $\gb$: at high temperature (low $\gb$) trajectories 
should be diffusive 
and satisfy invariance principle whereas at low temperature (high $\gb$) trajectories are believed to be superdiffusive. In the low temperature phase, the value of $\xi_0$ should not depend on $\gb$.
Diffusivity at high temperature has been proved for a discrete version of this model by Ioffe and Velenik  \cite{cf:IV} and it is reasonable to think that their technique can
adapt to the Brownian case when correlation have bounded range. Prior to that, similar results had been proved for directed polymer in random environment that can be considered as a simplified version of the model (see e.g.\ \cite{cf:B,cf:CY}). Superdiffusivity at low-temperature is a much more challenging issue:
physicists have no clear prediction for the value of $\xi_0$ and no mathematical progress towards proving $\xi_0>1/2$ has been made so far.

In any dimension, the value of $\xi$ is conjectured be related to the fluctuation of $\log Z^{\go}_L$ around its mean:
If the variance asymptotically satisfies
\begin{equation}
\var \log Z_L^{\go}\approx L^{2\chi},
\end{equation}
then one should have the scaling relation 
\begin{equation}
\chi=2\xi_0-1.
\end{equation}
The heuristic reason for this is that $L^{2\xi-1}$ is the entropic cost for moving $L^{\xi}$ away from the axis $\bbR e_1$, 
whereas the energetic gain one might expect for such a move is $L^\chi$.  The volume exponent corresponds to the value of $\xi$ for which cost and gain are balanced.
\medskip

When $d\le 3$, there is no phase transition and trajectories are expected to be superdiffusive for all $\gb$. It is not very clear what it means when $d=3$ but for
the two-dimensional case, physicists  predicts on heuristic ground that $\xi_0=2/3$ and $\chi=1/3$.
This is conjectured to hold not only for Brownian Motion in Poissonian Obstacles but for a whole family of two-dimensional models called the KPZ universality class
(directed last-passage percolation, first passage percolation, directed polymers...). In fact the conjecture goes much further and includes a description of the 
the scaling limit 
(see e.g.\ the seminal paper of Kardar Parisi and Zhang \cite{cf:KPZ}).

\medskip

A lot of efforts have been made to bring that conjecture on rigorous ground. In fact, it has even been proved that $\xi_0=2/3$ for some very specific models in the KPZ universality class:
\begin{itemize}
\item Directed last passage percolation in $1+1$ dimension with exponential environment by Johansson \cite{cf:J}.
\item Directed polymer in $1+1$ dimension with $\log$-Gamma environment and specific boundary condition by Seppalainen  \cite{cf:S}.
\end{itemize}
These two results have in common that they have been proved by using exact computation that are specific to the model.
Note that a similar result has been proved for the conjectured scaling limit of this model, in \cite{cf:BQS}.

\medskip

Another approach has been to look for more robust method using the idea of energy vs. entropy competition.
In \cite{cf:W2,cf:W3} , W\"uhtrich  proved that $\xi_0\ge 3/5$ for $d=2$ and that $\xi_0\le 3/4$ in all dimension
 (with a definition for $\xi_0$ that is slightly differs of the one we present here).  In \cite{cf:W1}, he proved a rigorous version of the scaling identity $\chi=2\xi_0-1$.
Similar results had been proved before for first passage Percolation by Licea, Newman and Piza \cite{cf:LNP}  and after for directed polymers by Peterman
\cite{cf:P} and M\'ejane \cite{cf:M}.


\medskip

In \cite{cf:Lac}, we have investigated the effect of transversal correlation in the environment for directed polymers, and in particular their effect on the volume exponent.
There it is shown that in any dimension, if environment correlations decay like a small power of the distance then, superdiffusivity holds.
More precisely that if the correlation decays like the inverse-distance to the power $\theta$, then $\xi_0\ge 3/(4+\theta)$.
In some cases it shows in particular that $\xi_0>2/3$ which indicates that KPZ conjecture does not holds in that case.
The bound $\xi\le 3/4$ of \cite{cf:M} remains valid.

\medskip

Here we study the effect of isotropic correlation (and therefore it seemed natural to to it in for an undirected model), and we have not found in the literature 
any prediction about what the value of $\xi_0$ should be.

%
%
%
%
%
%

\subsection{Main Result}

We present a lower bound bound on $\xi_0$ for our model with correlation.

Set 
\begin{equation}\begin{split}
 \bar \xi(d,\alpha,\gamma)&:= \frac{1}{\alpha-d+1} \quad \text{if $\gamma\le \alpha-d$}\\
 \bar \xi (d,\alpha,\gamma)&:= \frac{3}{3+\alpha+2\gamma-d} \quad \text{if $\gamma\ge \alpha-d$}
\end{split}
\end{equation}


\begin{theorem}\label{superdiff} (Lower bound for the volume exponent)

For any choice of $\alpha$, $d$, $\gamma$, one has

\begin{equation}
\xi_0\ge \bar \xi(d,\alpha,\gamma)\vee (1/2)
\end{equation}

where $\xi_0$ is the quantity defined in \eqref{defxiz}.
\end{theorem}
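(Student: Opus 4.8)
The plan is to establish the lower bound by a change-of-measure / entropy-versus-energy argument, in the spirit of Wüthrich \cite{cf:W2, cf:W3} and of the directed-polymer estimates in \cite{cf:Lac}, but adapted to the long-range correlated potential. Fix $\xi < \bar\xi(d,\alpha,\gamma)$ and suppose for contradiction that $\bbE[\mu_L(\mathcal A_L^\xi)]$ does not tend to $0$, i.e.\ it stays bounded away from $0$ along a subsequence. The strategy is then to exhibit an alternative ``tilted'' environment (or, equivalently, an alternative strategy for the Brownian motion) that beats the free energy of trajectories confined to $\mathcal C_L^\xi$, so much so that confinement becomes untenable. Concretely, I would partition the segment $[0,L]\times\{0\}$ into roughly $L^{1-\xi}$ blocks of length $L^\xi$; on each block the motion confined to $\mathcal C_L^\xi$ traverses a region of diameter $L^\xi$, and the energetic cost it pays there (the $\int V^\go$ term) has fluctuations of order $L^{\chi(\xi)}$ per block for an appropriate exponent $\chi(\xi)$ coming from the correlation structure $\bbE[V^\go(0)V^\go(x)]\asymp |x|^{d-\alpha-\gamma}$.

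The key steps, in order, are as follows. \emph{Step 1 (energy fluctuations over a block).} Compute the order of magnitude of the fluctuations of $\int_0^{T_{\mathcal H_\ell}} V^\go(B_t)\dd t$ for $\ell = L^\xi$ when $B$ is confined to a tube of width $\ell$. Here the two regimes of $\bar\xi$ appear: when $\gamma \le \alpha - d$ the dominant contribution to the variance comes from large traps of radius $\asymp \ell$, each contributing $\ell^{-\gamma}$ over a volume $\asymp \ell^d$ and appearing with probability $\asymp \ell^{d}\,\nu([\ell,\infty)) = \ell^{d-\alpha}$, giving a ``rare big-trap'' fluctuation mechanism; when $\gamma \ge \alpha - d$ the variance is instead governed by the power-law decay of the covariance and one gets the exponent $3 + \alpha + 2\gamma - d$ in the denominator, exactly as in the KPZ-type computation $\chi = 2\xi - 1$ solved against a correlation-corrected scaling relation. \emph{Step 2 (a competing lower bound for $Z_L^\go$).} Produce, on an event of $\bbP$-probability bounded below, a collection of ``favorable corridors'' of transversal displacement $\asymp L^\xi$ along which the potential is atypically small (or a favorable large-trap configuration one can route around), so that a trajectory exploiting them gains energy $\gtrsim L^{1-\xi}\cdot L^{\chi(\xi)}$ relative to the confined strategy, at entropic cost $\asymp L^{2\xi-1}$. \emph{Step 3 (balancing).} Show that for $\xi < \bar\xi$ the energetic gain strictly dominates the entropic cost and the prefactors, so that $\mathbb{E}[\mu_L(\mathcal A_L^\xi)] \to 0$; taking the supremum over such $\xi$ yields $\xi_0 \ge \bar\xi(d,\alpha,\gamma)$, and the trivial bound $\xi_0\ge 1/2$ handles the remaining case, giving $\xi_0 \ge \bar\xi \vee (1/2)$.

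The main obstacle I expect is \textbf{Step 2}: turning the variance computation of Step 1 into an honest, deterministic-on-a-good-event lower bound on $Z_L^\go$ that genuinely dominates $Z_L^\go$ restricted to $\mathcal A_L^\xi$. This requires (i) a second-moment or concentration argument showing that favorable corridors of the right length scale actually occur with non-vanishing probability despite the long-range correlations (which obstruct independence between blocks and force one to work with well-separated regions or to control the correlation tails quantitatively), and (ii) handling the heavy-tailed trap radii: because $\nu$ has a power-law tail, $V^\go$ is not bounded and one must truncate the contribution of very large traps (radii $\gg L$), show the truncation is negligible for $Z_L^\go$, and separately control the moderate-but-large traps that drive the fluctuations. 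A secondary technical point is the passage from ``survival probability'' estimates to ``confinement probability'' estimates, i.e.\ the standard but somewhat delicate argument (cf.\ \cite{cf:S}, \cite{cf:W2}) showing that if confined and unconfined partition functions differ by a factor $e^{-L^{\delta}}$ then $\mu_L(\mathcal A_L^\xi)\to 0$; this needs a covering of $\mathcal A_L^\xi$ by finitely many (polynomially many) shifted confined events and a union bound, which is routine once Steps 1--3 are in place.
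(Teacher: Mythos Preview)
Your outline captures the energy-versus-entropy heuristic, but it misses the two concrete mechanisms that the paper actually uses, and your Step~2 --- which you correctly flag as the main obstacle --- is precisely what the paper circumvents rather than solves.

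First, the comparison between confined and unconfined trajectories. You propose to \emph{find} favorable corridors via a second-moment or concentration argument; the paper does no such thing. Instead it uses rotational invariance: one compares $Z_L^\go(\mathcal A_L^\xi)$ to the partition functions $Z_L^{R_{i\theta}(\go)}(\mathcal A_L^\xi)$ of rotated environments for $i=-N,\dots,N$, and a pigeonhole argument (at most one index can be the maximum) shows that with probability $\ge 1-1/N$ some rotated tube does at least as well as the original. A short geometric estimate converts this into $Z_L^\go(\mathcal A_L^\xi)\le e^{L^{2\xi-1}(\log L)^3} Z_L^\go(\mathcal B_L^\xi)$, where $\mathcal B_L^\xi$ is the event of avoiding a thickened half-tube $\tilde C_L^\xi$. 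No fluctuation or correlation analysis enters here.

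Second, the energetic gain. Rather than searching for regions where $V^\go$ is atypically small, the paper \emph{modifies the environment}: it adds an independent Poisson sprinkling $\hat\go$ of traps of radius $\approx L^\xi$ with centers in $\bar C_L^\xi\subset \tilde C_L^\xi$, producing $\tilde\go=\go+\hat\go$. Because the added traps are wide enough to span the tube, every trajectory in $\mathcal A_L^\xi$ spends time $\gtrsim L^{\xi-\gep}$ in each and picks up an extra cost, so $\log Z_L^\go(\mathcal A_L^\xi)-\log Z_L^{\tilde\go}(\mathcal A_L^\xi)\ge L^{\frac{(d+1-\alpha-2\gamma)\xi+1}{2}-\gep}$; meanwhile $Z_L^{\tilde\go}(\mathcal B_L^\xi)=Z_L^\go(\mathcal B_L^\xi)$ since trajectories in $\mathcal B_L^\xi$ never see the added traps. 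The intensity of $\hat\go$ is chosen so that the number of added traps matches the natural fluctuation scale, and a Cauchy--Schwarz bound on the Radon--Nikodym density shows $\bbP(\go\in A)\le C\sqrt{\bbP(\tilde\go\in A)}$, transferring the conclusion back from $\tilde\go$ to $\go$.

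Your reading of the two regimes of $\bar\xi$ is also off. Both regimes arise from the \emph{same} mechanism (adding traps of radius $\asymp L^\xi$); the dichotomy comes from which of two inequalities is binding for $\xi<\bar\xi$: the energy--entropy comparison $\tfrac{(d+1-\alpha-2\gamma)\xi+1}{2}>2\xi-1$ (giving $\bar\xi=3/(3+\alpha+2\gamma-d)$) or the change-of-measure feasibility condition $(d-1-\alpha)\xi+1>0$ (giving $\bar\xi=1/(\alpha-d+1)$). There is no separate ``rare big-trap'' computation for the first regime.

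In short, your plan is not wrong in spirit, but the block-decomposition and second-moment route you sketch would have to confront exactly the correlation and heavy-tail difficulties you list, whereas the paper sidesteps them entirely via rotation symmetry and an explicit environment tilt.
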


\begin{rem}\rm
In some cases, the lower bound that we get for $\xi_0$ is larger than $3/4$, which contrasts with all the results that we have reviewed in the previous section and indicates that
isotropic correlation enhance superdiffusivity in a more drastic way than transversal ones.
The above result gives a necessary condition for having superdiffusivity: $\gamma<\alpha-d$ and $\alpha-d<1$ or $\gamma>\alpha-d$ and $\alpha+2\gamma-d<3$. 
\end{rem}

\begin{rem}\rm
The definition of the volume exponent that we use is different of the one used in \cite{cf:W2} which is slightly weaker.
Combining  techniques used in \cite{cf:Lac} and here one could prove also that $\xi_0\ge 3/5$ when $d=2$ for any value of $\alpha$ and $\gamma$
for this definition of $\xi_0$.
\end{rem}

\subsection{Further questions}

We prove in this paper that for a class of correlated environment, the trajectories have superdiffusive behavior
and that the bound $\xi\le 3/4$ that is valid for the uncorrelated model \cite{cf:W2} is not valid here and can be beaten.
Therefore one would be interested to find an upper bound ($<1$) for $\xi$. We have addressed this issue in companion paper \cite{cf:L2}. In some 
special cases (when either $\gamma=\alpha-d>1/3$) one can even prove that the lower bound that we prove here is optimal and give the exact value of $\xi_0=\xi_1$.

\medskip

The result that we present concerns the so-called point-to-plane model. A similar result should hold for the point-to-point model.
The method that we use in Section \ref{addt} and \ref{tunapa} are quite robust and could be easily adapted to the other setup but getting something 
similar to what is done in Section \ref{premsr} seems more difficult and challenging and we are not able to do it yet. One can still get a non optimal
result by using another construction inspired by what is done in \cite{cf:W3}, we present it in the Appendix.

\medskip

For the Brownian directed polymer in correlated environment, in \cite{cf:Lac},  it is shown that either superdiffusivity holds  at all temperature
or that one has diffusivity at high temperature (except for some special limiting cases) . For the model presented here one would like to show something similar e.g.\ that
diffusivity holds if correlation have fast-decay at infinity (decay like  a large power of the inverse-distance) and the amplitude of $V$ is small. 
For the moment this is quite out of reach and the methods used in \cite{cf:IV} do not seem to adapt to this case.

\section{Proof of Theorem \ref{superdiff}}


%

\subsection{Sketch of proof}

In order to make the strategy of the proof clear we 
need to introduce some notation. One defines 
\begin{equation}\label{bariri}
\bar C_L^\xi := [L/2,L] \times [-L^{\xi}/2,L^{\xi}/2]^{d-1}
\end{equation}
and 
\begin{equation}\label{tildidi}
\tilde C_L^\xi:= \{ x\in \bbR^d \ | \ d(x, \bar C_L)\le 2 \sqrt{d} L^{\xi} \}=  \bigcup_{y\in \bar C_L} B(y,2 \sqrt{d} L^{\xi}),
\end{equation}
where for a closed set $A\subset \bbR^d$,  and $x\in \bbR^d$, $d(x,A)$ denotes the Euclidean distance between
$x$ and $A$, i.e.
\begin{equation}
d(x,A):=\min_{y\in A} |y-x|
\end{equation}
($|\cdot|$ is the Euclidean norm), and $B(x,r)$, $r\ge 0$ is the Euclidean ball of radius $r$.
Let $\mathcal B^{\xi}_L$ be the set of trajectories that avoids the set $\tilde C_L^\xi$.
\begin{equation}
\mathcal B^{\xi}_L:=  \{ B \ | \ \forall t \in [0, T_{\mathcal H_L}], B_t\notin \tilde C_L^\xi \}.
\end{equation}
Note that, as Brownian trajectories are continuous
\begin{equation}
\mathcal B^{\xi}_L\cap \mathcal A^{\xi}_L=\emptyset.
\end{equation}
The first step of our proof (Section \ref{premsr})  is inspired by \cite{cf:LNP}. We prove a result much weaker that Theorem \ref{superdiff} by 
using a simple geometric argument combined to rotational invariance: that with probability close to one,

\begin{equation}
\mu^{\go}_L(\mathcal A_L^{\xi})\le  e^{L^{2\xi-1}(\log L)^3}\mu^{\go}_L(\mathcal B^{\xi}_L),
\end{equation}
or equivalently, that with probability close to one,

\begin{equation}\label{machinbidul}
Z^{\go}_L(\mathcal A_L^{\xi})\le e^{L^{2\xi-1}(\log L)^3}Z^{\go}_L(\mathcal B^{\xi}_L).
\end{equation}
where for an event $A$, we use the notation
\begin{equation}
 Z_L^{\go}(A):=  Z_L^{\go}\times \mu_L^{\go}(A)= \bE\left[\exp\left(\int_{0}^{T_{\mathcal H_L}} (V^{\go}(B_t)+\gl)\dd t\right)\ind_{A}\right].
\end{equation}

Then we modify slightly the environment ($\go\to \tilde \go$) by adding additional traps whose radii are in $(\sqrt{d}L^\xi,2\sqrt{d}L^\xi)$, and whose 
centers are in the region $ \bar C^{\xi}_L$.
The second step of the proof (Section \ref{addt}) is to show that typical realization of  $\tilde \go$ are roughly the same as typical realization of $\go$.

\medskip

Finally , we notice that adding these traps lowers the value of $Z^{\go}_L(\mathcal A_L^{\xi})$ but  that  $Z^{\go}_L(\mathcal B^{\xi}_L)=Z^{\tilde\go}_L(\mathcal B^{\xi}_L)$.
(adding these traps changes the values taken by $V$ only in the  region $\tilde C_L^\xi$ that the trajectory in the event $\mathcal B^{\xi}_L$ do not visit). The third step of the proof (Section \ref{tunapa}) is to show that
with our choice of $\tilde \go$ and $\xi$, one has with large probability

\begin{equation}
Z^{\tilde \go}_L(\mathcal A_L^{\xi})\le e^{-L^{2\xi-1+\gep}} Z^{ \go}_L(\mathcal A_L^{\xi}).
\end{equation}
for some $\gep>0$, which combined with \eqref{machinbidul}, gives the result with $\tilde \go$ instead of $\go$.
The fact that $\go$ and $\tilde \go$ look typically the same allows to conclude.

\medskip

We explain in the course of the proof the reasons 
for our choices of $\tilde \go$ and how we obtain the condition on $\xi$.

\subsection{Using rotational invariance}\label{premsr}

For $\theta\in (-\pi/2,\pi/2)$, let $R_{\theta}$ denote following the rotation of $\R^d$
\begin{equation}
 (x_1,x_2,x_3,\dots,x_d)\mapsto (x_1\cos \theta- x_2\sin \theta, x_2 \cos \theta+x_1 \sin \theta, x_3, \dots, x_d).   
\end{equation}
Set
\begin{equation}
 \mathcal H^{\theta}_L:=R_\theta[\mathcal H_L]  \quad \text{and} \quad \mathcal C_L^{\theta,\xi}=R_{\theta}\mathcal C^\xi_L.
\end{equation}
(the image of the sets $\mathcal H_L$ resp.\ $\mathcal C^{\theta,\xi}_L$ for $R_{\theta}$).
One defines in the same fashion the event $\mathcal A_L^{\theta,\xi}$  as
\begin{equation}
\mathcal A_L^{\theta,\xi}:= \{ B\ | \ \forall t \in [0, T_{\mathcal H^{\theta}_L}], B_t\in \mathcal C_L^{\theta,\xi} \}.
\end{equation}
Note that if $B \in \mathcal A_L^{\theta, \xi}$ if and only if $R_{-\theta} B\in \mathcal A_L^{\xi}$.

One proves the following 

\begin{proposition}\label{preswe}
For any $\xi \in (0,1)$ set $\theta=\theta(L,\xi):=10 \sqrt{d} L^{\xi-1}$. Then one has that for any $N\le \gd \theta^{-1}$ (for some fixed small enough  $\gd >0$),
\begin{equation}
\bbP\left[ Z^{\go}_L(\mathcal A_L^{\xi})\ge 
e^{2 N^2 \theta^2 L\sqrt{\log L}}\!\!\!\!\!\! \max_{i\in \{-N,\dots,N \}\setminus\{0\}} \!\!\!\!\!\! Z^{\go}_L(\mathcal A_L^{i\theta,\xi}\cap \mathcal B^{\xi}_L )\right] \le  \frac{1}{L}+\frac{1}{N}.
\end{equation}
In particular, setting $N:= \log L$ one has 
\begin{equation}
\bbP\left[ Z^{\go}_L(\mathcal A_L^{\theta,\xi})\ge e^{(\log L)^3 L^{2\xi-1}} Z^{\go}_L(\mathcal B^{\xi}_L )\right] \le   \frac{2}{\log L}.
\end{equation}
\end{proposition}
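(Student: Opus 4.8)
The plan is to combine the rotational invariance of the Poissonian environment with a Licea--Newman--Piza type pigeonhole, the geometry being exactly what the constant $10$ in $\theta=10\sqrt dL^{\xi-1}$ and the $2\sqrt dL^\xi$ margin in $\tilde C_L^\xi$ are designed for. The first step is to record the deterministic fact that makes $\mathcal B_L^\xi$ useful: for $1\le|i|\le N$ with $N\le\delta\theta^{-1}$, a trajectory that stays in the tilted tube $\mathcal C_L^{i\theta,\xi}$ until it reaches $\mathcal H_L^{i\theta}$ stays away from $\tilde C_L^\xi$. Indeed, a point $x=R_{i\theta}y\in\mathcal C_L^{i\theta,\xi}$ with $y_1<L$ either has $x_1<L/2-2\sqrt dL^\xi$, hence lies outside $\tilde C_L^\xi$ because $\bar C_L^\xi$ starts only at $L/2$; or has $x_1\ge L/2-2\sqrt dL^\xi$, which forces $y_1\gtrsim L/2$ and therefore $|x_2|\ge|y_1|\,|\sin i\theta|-L^\xi/2\ge(L/2)\theta(1-o(1))-L^\xi/2>4\sqrt dL^\xi$, again outside $\tilde C_L^\xi$, whose transversal half-width is $L^\xi/2+2\sqrt dL^\xi$; the constraint $N\theta\le\delta$ keeps $\cos i\theta$ close to $1$, so the estimate is uniform in $i$. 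On the other hand $\mathcal A_L^\xi\cap\mathcal B_L^\xi=\emptyset$, which is why the central term $Z_L^\go(\mathcal A_L^\xi)$ is the one that genuinely has to be controlled.

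The second ingredient is rotational invariance. Since the intensity $\mathcal L\times\nu$ is invariant under every rotation, $R_\vartheta\go\stackrel{d}{=}\go$; the change of variables $B\mapsto R_{j\theta}B$ in the defining expectation, together with $V^\go(R_{j\theta}\,\cdot\,)=V^{R_{-j\theta}\go}(\,\cdot\,)$, identifies the tilted-tube partition function (computed up to the tilted hitting time) with $Z_L^{R_{-j\theta}\go}(\mathcal A_L^\xi)$, so that the resulting family $\hat X_j$ is stationary under the index shift $j\mapsto j-1$, with $\hat X_0=Z_L^\go(\mathcal A_L^\xi)$. The quantity $Z_L^\go(\mathcal A_L^{i\theta,\xi}\cap\mathcal B_L^\xi)$ appearing in the statement differs from $\hat X_i$ only through the weight carried on the short stretch of path between the tilted hyperplane and the true one, a stretch of time length $O((i\theta)^2L)=O(N^2\theta^2L)$; on an event of $\bbP$-probability at least $1-1/L$, obtained by a union bound on the size of $V^\go$ over all such stretches inside a ball of polynomial radius, this weight lies in $[e^{-N^2\theta^2L\sqrt{\log L}},1]$ — and this is where both the $\sqrt{\log L}$ in the exponent and the $1/L$ in the bound come from. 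Hence, off that event, the event of the proposition is contained in $\{\hat X_0\ge e^{c}\max_{1\le|i|\le N}\hat X_i\}$ with $c:=N^2\theta^2L\sqrt{\log L}>0$.

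The pigeonhole is then soft. For $c>0$ put $\tilde G_k:=\{\hat X_k\ge e^{c}\hat X_j\text{ for all }j\text{ with }1\le|j-k|\le N\}$. If $1\le|k-k'|\le N$ then $\tilde G_k\cap\tilde G_{k'}=\emptyset$, for otherwise $\hat X_k\ge e^{c}\hat X_{k'}\ge e^{2c}\hat X_k$, impossible. So any block of $N+1$ consecutive integers carries at most one index $k$ with $\tilde G_k$; summing indicators over such a block and using stationarity gives $(N+1)\bbP[\tilde G_0]\le1$, i.e. $\bbP[\tilde G_0]\le 1/(N+1)$. Together with the previous step this yields the bound $1/L+1/N$. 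For the ``in particular'' part, take $N=\log L$ (admissible since $\log L\le\delta\theta^{-1}$ for $L$ large, as $\xi<1$), bound $\max_{1\le|i|\le N}Z_L^\go(\mathcal A_L^{i\theta,\xi}\cap\mathcal B_L^\xi)\le Z_L^\go(\mathcal B_L^\xi)$ (each event sitting inside $\mathcal B_L^\xi$), and check $2N^2\theta^2L\sqrt{\log L}=200\,d\,(\log L)^{5/2}L^{2\xi-1}\le(\log L)^3L^{2\xi-1}$ together with $1/L+1/N\le 2/\log L$ for $L$ large.

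The pigeonhole step is robust; the real work sits in the first two. The delicate points are: making the deterministic geometry tight enough that the tilted tubes genuinely miss $\tilde C_L^\xi$ (this pins down the constant $10\sqrt d$ in $\theta$, the radius $2\sqrt dL^\xi$ in $\tilde C_L^\xi$, and the start $L/2$ of $\bar C_L^\xi$); and the a priori control of the Poissonian potential on the terminal stretches, valid off an event of probability $1/L$, which accounts for the $\sqrt{\log L}$. One also has to check that, past the tilted hyperplane, a trajectory counted in $Z_L^\go(\mathcal A_L^{i\theta,\xi}\cap\mathcal B_L^\xi)$ still carries essentially all the weight of the corresponding tilted-tube trajectory, the short terminal excursion being unable to re-enter $\tilde C_L^\xi$ without an unlikely detour — which is again folded into the exceptional event.
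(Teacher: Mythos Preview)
Your proposal is correct and follows essentially the same two-step scheme as the paper: a deterministic comparison (the paper's Lemma~\ref{milor}) between $Z_L^\go(\mathcal A_L^{i\theta,\xi}\cap\mathcal B_L^\xi)$ and $Z_L^{R_{-i\theta}\go}(\mathcal A_L^\xi)$ valid on the event $\{\max_{|x|\le L^2}V^\go(x)\le\log L\}$ of Lemma~\ref{troto}, followed by a rotational pigeonhole (the paper's Lemma~\ref{rotatruc}). The only notable difference is the pigeonhole itself: the paper argues that among $N$ identically distributed variables some index $k_0$ is the strict maximum with probability $\le 1/N$ and then shifts by $R_{-k_0\theta}$, whereas you exploit full stationarity of $(\hat X_j)_{j\in\bbZ}$ together with pairwise disjointness of the events $\tilde G_k$ on a block of length $N+1$; both are standard and equivalent here.
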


We split the proof of the Proposition into two lemmas:
The first lemma allows to compare almost deterministically
$Z^{\go}_L(\mathcal A_L^{\theta,\xi})$ with $ Z^{R_{-\theta}(\go)}_L(\mathcal A_L^{\xi})$
(which by rotation invariance of $\go$ is distributed like $Z^{\go}_L(\mathcal A_L^{\xi})$).
$R_\theta(\go)$ denotes the image of the Poisson Point Process $\go$ by $R_{\theta}$, i.e.\ (recall \eqref{degome}) 

\begin{equation}
R_{\theta}(\go):= \{ (R_{\theta}\go_i,r_i) \ | i\in \N\} .
\end{equation}

\begin{lemma} \label{milor}
Set $\xi\in(0,1)$, $\theta$ such that $|\theta|\ge 10 \sqrt{d} L^{\xi-1}$ and $|\theta|\le \gd$ for some $\gd>0$, and $\go$ that satisfies $\max_{x\in[-L^2,L^2]^d} V^{\go}(x)\le \log L $.
Then for all sufficiently large $L$ one has
\begin{equation}
Z^{\go}_L(\mathcal A_L^{\theta,\xi}\cap \mathcal B^{\xi}_L)> Z^{R_{-\theta}(\go)}_L(\mathcal A_L^{\xi})\exp(-2 \theta^2 L\sqrt{\log L}) 
\end{equation}
\end{lemma}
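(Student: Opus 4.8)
The plan is to reduce the stated estimate to a one‑dimensional Brownian small‑deviation bound, using the rotational invariance of $\go$ only to rewrite the right–hand side. First I would perform, inside the expectation defining $Z_L^{R_{-\theta}(\go)}(\mathcal A_L^{\xi})$, the orthogonal change of variables $B\mapsto R_{\theta}B$. Because $R_{\theta}$ preserves Wiener measure, because $V^{R_{-\theta}(\go)}(R_{-\theta}x)=V^{\go}(x)$ (distances to the trap centres are unchanged), because $T_{\mathcal H_L}(R_{-\theta}B)=T_{\mathcal H^{\theta}_L}(B)$ and $\{R_{-\theta}B\in\mathcal A_L^{\xi}\}=\{B\in\mathcal A_L^{\theta,\xi}\}$, one gets the exact identity
\[
Z_L^{R_{-\theta}(\go)}(\mathcal A_L^{\xi})=\bE\Big[\exp\Big(-\int_0^{T_{\mathcal H^{\theta}_L}}(V^{\go}(B_t)+\gl)\dd t\Big)\ind_{\mathcal A_L^{\theta,\xi}}\Big].
\]
So it suffices to show that a trajectory following the tilted tube up to $T_{\mathcal H^{\theta}_L}$ can be prolonged, at a multiplicative cost at least $e^{-2\theta^{2}L\sqrt{\log L}}$, so as to reach the straight hyperplane $\mathcal H_L$ without ever meeting $\tilde C_L^{\xi}$.

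Next I would record the geometry, for $\gd$ small and $L$ large. (a) $\mathcal C_L^{\theta,\xi}\cap\tilde C_L^{\xi}=\emptyset$: a point $R_{\theta}(s,v)$ with $v\in[-L^{\xi}/2,L^{\xi}/2]^{d-1}$ lying in $\tilde C_L^{\xi}$ would have first coordinate $s\cos\theta-v_1\sin\theta\ge L/2-2\sqrt d L^{\xi}$, hence $s\cos\theta\ge L/2-o(L)$, hence second coordinate $s\sin\theta+v_1\cos\theta\ge(1-o(1))\tfrac L2\cdot\tfrac{2\theta}{\pi}-\tfrac{L^{\xi}}2>2\sqrt d L^{\xi}$ (using $\sin\theta\ge 2\theta/\pi$ and $\theta\ge10\sqrt d L^{\xi-1}$), contradicting membership in $\tilde C_L^{\xi}$. (b) On $\mathcal A_L^{\theta,\xi}$ one has $\tau:=T_{\mathcal H^{\theta}_L}<T_{\mathcal H_L}$, since whenever the tilted–axis coordinate $s_t$ of $B_t$ satisfies $s_t\le L$ its first coordinate is $\le L\cos\theta+\tfrac{L^{\xi}}2\sin\theta<L$ (because $L(1-\cos\theta)\ge\tfrac{L\theta^{2}}5>\tfrac{L^{\xi}}2\theta$). (c) The endpoint $P:=B_{\tau}=R_{\theta}(L,v)$ satisfies $P_2\ge5\sqrt d L^{\xi}$ and $\ell:=L-P_1=L(1-\cos\theta)+v_1\sin\theta\in[\tfrac{L\theta^{2}}7,L\theta^{2}]$. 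Now on $\mathcal A_L^{\theta,\xi}$, by (a) the trajectory avoids $\tilde C_L^{\xi}$ on $[0,\tau]$, so there $\ind_{\mathcal B_L^{\xi}}=\ind_{\{B|_{[\tau,T_{\mathcal H_L}]}\text{ avoids }\tilde C_L^{\xi}\}}$; with (b) and the strong Markov property at $\tau$,
\[
Z_L^{\go}(\mathcal A_L^{\theta,\xi}\cap\mathcal B_L^{\xi})=\bE\Big[\exp\Big(-\int_0^{\tau}(V^{\go}(B_t)+\gl)\dd t\Big)\ind_{\mathcal A_L^{\theta,\xi}}\,h(B_{\tau})\Big],
\]
where $h(P):=\bE_P\big[\exp(-\int_0^{T_{\mathcal H_L}}(V^{\go}(B_t)+\gl)\dd t)\,\ind_{\{B|_{[0,T_{\mathcal H_L}]}\text{ avoids }\tilde C_L^{\xi}\}}\big]$. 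Comparing with the first display, the lemma follows as soon as $h(P)\ge e^{-2\theta^{2}L\sqrt{\log L}}$ for every $P$ of the form in (c).

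To bound $h(P)$ from below I would fix $t^{*}:=\ell/\sqrt{2\log L}$ and restrict to the event $\mathcal E$ that the first coordinate of $B$ reaches level $L$ before time $t^{*}$, that its second coordinate never moves by more than $P_2/2$ on $[0,t^{*}]$, and (at no cost) that $B$ does not leave $[-L^{2},L^{2}]^{d}$ before $T_{\mathcal H_L}$. On $\mathcal E$ the second coordinate stays $>\tfrac{P_2}2\ge\tfrac52\sqrt d L^{\xi}>2\sqrt d L^{\xi}$, so the path avoids $\tilde C_L^{\xi}$; $V^{\go}\le\log L$ along the path by hypothesis; and $T_{\mathcal H_L}\le t^{*}$. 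Hence $h(P)\ge e^{-(\log L+\gl)t^{*}}\,\bP_P(\mathcal E)$, and by the reflection principle together with a Gaussian tail bound (the relevant argument being $\ell/\sqrt{t^{*}}=(\ell\sqrt{2\log L})^{1/2}$),
\[
\bP_P(\mathcal E)\ge\bP\big(\sup_{0\le t\le t^{*}}W_t\ge\ell\big)-\bP\big(\sup_{0\le t\le t^{*}}|W'_t|>\tfrac{P_2}2\big)\ge\frac{c}{(\ell\sqrt{\log L})^{1/2}}\,e^{-\ell\sqrt{\log L}/\sqrt2}-Ce^{-cL\sqrt{\log L}},
\]
with $W,W'$ standard one‑dimensional Brownian motions; the subtracted term is negligible because $P_2\asymp L\theta$ while $\sqrt{t^{*}}\asymp\theta\sqrt L/(\log L)^{1/4}$. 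Since also $e^{-(\log L+\gl)t^{*}}=e^{-(1+o(1))\ell\sqrt{\log L}/\sqrt2}$, multiplying gives $h(P)\ge\tfrac{c}{2(\ell\sqrt{\log L})^{1/2}}\,e^{-(\sqrt2+o(1))\ell\sqrt{\log L}}$, which by $\ell\le L\theta^{2}$ and $\sqrt2<2$ is $>e^{-2\theta^{2}L\sqrt{\log L}}$ once $L$ is large. (Here $\ell$ stays bounded away from $0$, i.e.\ $\theta^{2}L$ is bounded below, throughout the range $\xi\ge1/2$ relevant to Theorem~\ref{superdiff}; for $\xi<1/2$ one argues slightly differently and in any case $\xi_0\ge1/2$ is free.)

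The hard part is the calibration of $t^{*}$ in the last step: the energetic cost of the prolongation is of order $t^{*}\log L$ and its entropic cost — pushing the first coordinate a distance $\ell$ in time $t^{*}$ — is of order $\ell^{2}/t^{*}$, and only the geometric‑mean scale $t^{*}\asymp\ell/\sqrt{\log L}$ balances the two at the common order $\ell\sqrt{\log L}\asymp\theta^{2}L\sqrt{\log L}$, which is exactly what the target factor $e^{-2\theta^{2}L\sqrt{\log L}}$ can absorb (with room, since one only produces $e^{-\sqrt2\,\theta^{2}L\sqrt{\log L}}$). The geometric inputs of the second step — in particular that the prolongation starts at transversal height $P_2\gtrsim L\theta$, comfortably above its own fluctuation $\sqrt{t^{*}}$ — are precisely what keeps the short prolongation from drifting back into $\tilde C_L^{\xi}$.
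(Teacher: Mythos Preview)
Your proof is correct and follows essentially the same route as the paper: rewrite the right-hand side via the rotation $B\mapsto R_\theta B$, apply the strong Markov property at $T_{\mathcal H_L^\theta}$, and bound the cost of prolonging the path from $\mathcal H_L^\theta$ to $\mathcal H_L$ while staying away from $\tilde C_L^\xi$ and inside $[-L^2,L^2]^d$. The only difference is cosmetic: for the prolongation the paper uses a cube-exit argument (the Brownian motion exits an axis-aligned cube of half-side $d(x,\mathcal H_L)$ through the face contained in $\mathcal H_L$ with probability $1/(2d)$, and that cube is small enough to miss $\tilde C_L^\xi$) in place of your explicit calibration of $t^*$, but both reduce to the same one-dimensional estimate $\bE[e^{-(\log L)\tau_{L\theta^2}}]=e^{-(\sqrt{2}+o(1))L\theta^2\sqrt{\log L}}$; your caveat about $\xi<1/2$ applies equally to the paper's version and is irrelevant for the application.
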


The second lemma estimates the probability that $Z^{\go}_L(\mathcal A_L^{\xi})$ has the largest value among the different 
$(Z^{R_{i\theta}(\go)}_L(\mathcal A_L^{\xi}))_{i\in \{-N,\dots,N\}}$. The argument comes from \cite{cf:P},

\begin{lemma}\label{rotatruc}
For any value of $\theta$ and any $N$
\begin{equation}
\bbP\left[ Z^{\go}_L(\mathcal A_L^{\xi})> \max_{i\in\{-N,\dots,N\}\setminus\{0\}} Z^{R_{i\theta}(\go)}_L(\mathcal A_L^{\xi})\right]\le \frac{1}{N}
\end{equation}

\end{lemma}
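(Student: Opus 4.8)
The plan is to exploit the rotational invariance of the Poisson Point Process $\go$ together with a simple exchangeability argument, in the spirit of Piterman \cite{cf:P}. First I would observe that since $\mathcal L\times\nu$ is invariant under the rotation $R_\theta$ (the Lebesgue measure is rotation invariant and $R_\theta$ acts only on the $\bbR^d$ coordinate, leaving the radius untouched), the law of $R_\theta(\go)$ under $\bbP$ equals the law of $\go$. More generally, the family of random variables
\begin{equation}
\bigl(Z^{R_{i\theta}(\go)}_L(\mathcal A_L^{\xi})\bigr)_{i\in\{-N,\dots,N\}}
\end{equation}
is \emph{exchangeable}: for each fixed $i$, $R_{i\theta}(\go)$ has the same law as $\go$, and more precisely the joint law of the whole vector is invariant under the shift $i\mapsto i+1$ composed with applying $R_\theta$ to the underlying point process, which again has the same law. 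Hence each of the $2N+1$ coordinates has the same probability of being the strict maximum.

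The key step is then the union/counting bound. Let $M$ denote the event that $Z^{\go}_L(\mathcal A_L^{\xi})=Z^{R_{0\cdot\theta}(\go)}_L(\mathcal A_L^{\xi})$ is strictly larger than all the other $2N$ values $Z^{R_{i\theta}(\go)}_L(\mathcal A_L^{\xi})$, $i\ne 0$. For each $j\in\{-N,\dots,N\}$ let $M_j$ be the analogous event that the $j$-th coordinate is the strict maximum. By exchangeability $\bbP[M_j]=\bbP[M_0]=\bbP[M]$ for all $j$, and the events $M_j$ are disjoint (there can be at most one strict maximum), so
\begin{equation}
(2N+1)\,\bbP[M]=\sum_{j=-N}^{N}\bbP[M_j]=\bbP\Bigl[\bigcup_{j=-N}^{N}M_j\Bigr]\le 1,
\end{equation}
which already gives $\bbP[M]\le 1/(2N+1)\le 1/N$. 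Since the event in the statement, $\{Z^{\go}_L(\mathcal A_L^{\xi})>\max_{i\ne 0}Z^{R_{i\theta}(\go)}_L(\mathcal A_L^{\xi})\}$, is exactly $M$, this is the desired bound. (One should note that the inequality is strict, so ties are excluded from all the $M_j$; on the event that the maximum is attained by several indices, none of the $M_j$ occurs, which only helps.)

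The only point requiring a little care — and the one I expect to be the main (minor) obstacle — is making the exchangeability statement fully rigorous, i.e.\ checking that the joint distribution of $(Z^{R_{i\theta}(\go)}_L)_{i}$ is genuinely invariant under cyclic-type relabelling. The clean way is to write $Z^{R_{i\theta}(\go)}_L(\mathcal A_L^{\xi})=F_i(\go)$ where $F_i(\go):=Z^{R_{i\theta}(\go)}_L(\mathcal A_L^{\xi})$, note that $F_i(\go)=F_0(R_{i\theta}\go)$, and then use that $R_{\theta}$ preserves the law of $\go$ to get $(F_{-N}(\go),\dots,F_N(\go))\overset{d}{=}(F_{-N+1}(\go),\dots,F_{N+1}(\go))$; iterating the shift and symmetry $R_\theta\leftrightarrow R_{-\theta}$ gives that all coordinates play symmetric roles, which is all that is needed for the counting bound above. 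No quantitative input about $\theta$, $N$ or the geometry of $\mathcal A_L^\xi$ enters here — that is exactly why the lemma holds "for any value of $\theta$ and any $N$", and why the real work of the proof is pushed into Lemma \ref{milor}.
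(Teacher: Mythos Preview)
Your approach has a genuine gap: the family $\bigl(Z^{R_{i\theta}(\go)}_L(\mathcal A_L^{\xi})\bigr)_{i\in\{-N,\dots,N\}}$ is \emph{not} exchangeable, and your claim that $\bbP[M_j]=\bbP[M_0]$ for all $j$ is false. The paper says this explicitly: ``However they are not exchangeable, and therefore the statement is not that obvious.'' What rotational invariance gives you is that the \emph{infinite} sequence $(Z^{R_{i\theta}(\go)}_L)_{i\in\bbZ}$ is stationary. But $M_j$ is the event of being the strict maximum \emph{inside the fixed finite window} $\{-N,\dots,N\}$; shifting by $-j\theta$ sends $M_j$ to the event that $Z^{\go}_L$ beats the variables indexed by $\{-N-j,\dots,N-j\}\setminus\{0\}$, which is a \emph{different} competitor set for each $j$. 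Reflection symmetry only gives $\bbP[M_j]=\bbP[M_{-j}]$, which together with stationarity is still not enough.

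The paper's fix is a pigeonhole trick on half the range. The disjointness observation you made is correct and gives $\sum_{k=1}^{N}\bbP[M'_k]\le 1$, where $M'_k$ is the event that index $k$ is the strict max over the \emph{smaller} window $\{1,\dots,N\}$. Hence some $k_0\in\{1,\dots,N\}$ has $\bbP[M'_{k_0}]\le 1/N$. Now apply rotational invariance (shift by $-k_0\theta$): this turns $M'_{k_0}$ into the event that $Z^{\go}_L$ beats the variables indexed by $\{1-k_0,\dots,N-k_0\}\setminus\{0\}$. Since $k_0\in\{1,\dots,N\}$, this shifted set is always contained in $\{-N,\dots,N\}\setminus\{0\}$, so the max over the latter is at least as large, and the desired inclusion of events gives the bound $\le 1/N$. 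The point of doubling the window from $\{1,\dots,N\}$ to $\{-N,\dots,N\}$ is precisely to absorb the unknown shift $k_0$.
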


The proof for of the Proposition from the lemmas is straightforward with the use of Lemma \ref{troto} that ensures that with probability $1/N$ the assumption on $V$ in Lemma \ref{milor} is satisfied.

\begin{proof}[Proof of Lemma \ref{milor}]
By symmetry we can assume $\theta>0$.
 The assumptions  we have on $\theta$ guarantees that on the event $\mathcal A_L^{\theta,\xi}$, 
$T_{\mathcal H^{\theta}_L}<T_{\mathcal H_L}$, and that 
$\mathcal C_L^{\theta,\xi} \cap \tilde C_L^\xi=\emptyset$.
(see figure \ref{yty}). Therefore using the strong Markov property for Brownian Motion,
\begin{multline}\label{ztz}
 Z^{\go}_L(\mathcal A_L^{\theta,\xi}\cap \mathcal B^{\xi}_L)= \bE\left[\exp\left(-\int_{0}^{T_{\mathcal H^{\theta}_L}} (V^{\go}(B_t)+\gl)
\dd t\right) \right. \\ \left.
\ind_{\mathcal A_L^{\theta,\xi}}\bE_{B_{ T_{\mathcal H^{\theta}_L}}} \left[
\exp\left(-\int_{0}^{T_{\mathcal H_L}} (V^{\go}(B_t)+\gl)\dd t\right)\ind_{\{\forall s\le T_{\mathcal H_L}, B_s \notin\tilde C^\xi_L\}}\right]\right].
\end{multline}

\begin{figure}[hlt]
\begin{center}
\leavevmode 
\epsfxsize =14 cm
\psfragscanon
\psfrag{L2}{$L/2$}
\psfrag{L}{$L$}
\psfrag{R2}{$\R^{d-1}$}
\psfrag{R}{$\R$}
\psfrag{HL}{$\mathcal H_L$}
\psfrag{HLTHETA}{$\mathcal H_L^{\theta}$}
\psfrag{theta2xi}{$O(\theta^2L)$}
\psfrag{Lxi}{$L^\xi$}
\psfrag{O}{$0$}
\epsfbox{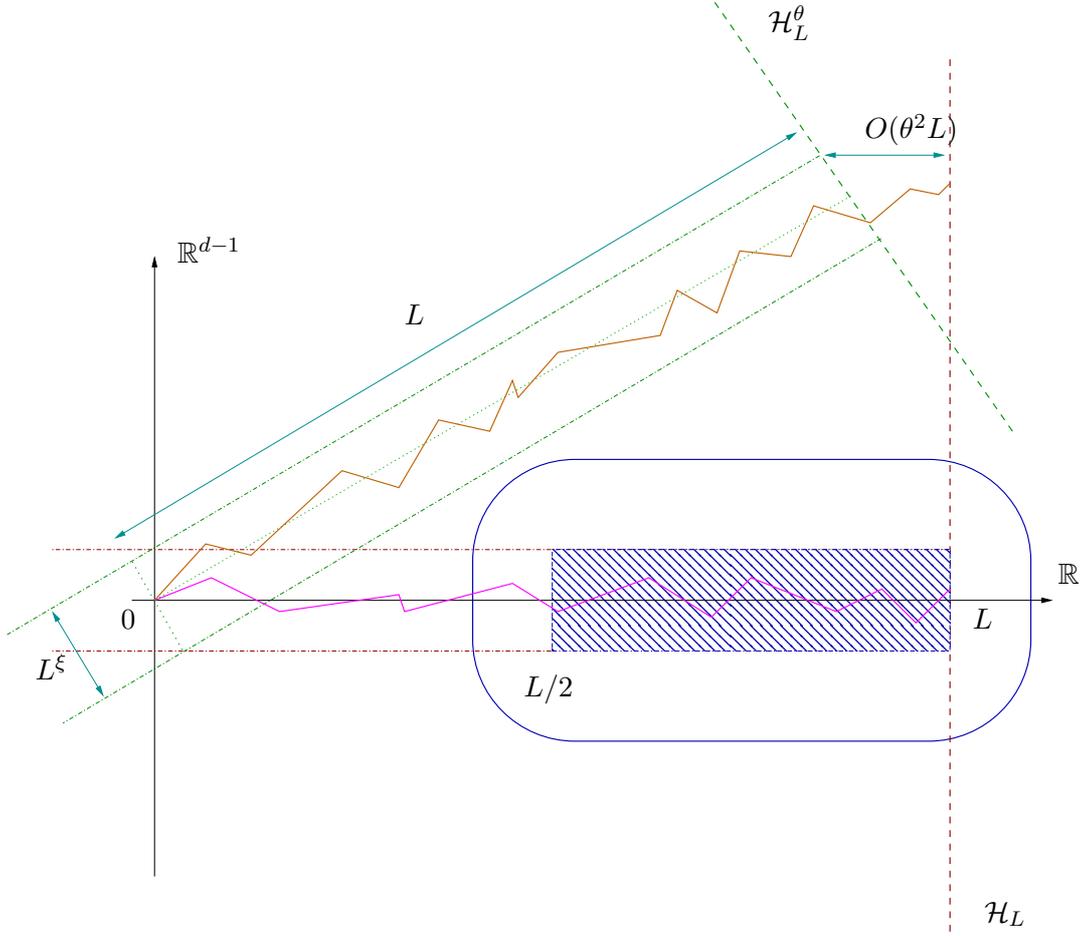}
\end{center}
\caption{\label{yty} Projection of the model along the two first coordinates. The two tubes represented are $\mathcal C_L^\xi$
and $\mathcal C_L^{\xi,\theta}$. The shadowed region is $\bar C_L^{\xi}$, and this is where $\go$ is modified.
The full line that encircles $\bar C_L^{\xi}$ denote the limit of $\tilde C_L^\xi$, the region where we may have $V^{\tilde \go}\ne V^\go$.
The two trajectories represent typical trajectories of $\mathcal A_L^{\xi}$ and $\mathcal A_L^{\xi,\theta}\cap \mathcal B_L^\xi$.
One can see on the picture that if $\theta$ is chosen large enough ($\theta\ge C L^{\xi-1}$ where $C$ is a constant depending on $d$),
one the event $\mathcal A^{\theta,\xi}_L$, the hitting time of $\mathcal H_L$ is larger than the hitting time of $\mathcal H_L^{\theta}$.
Moreover the tube $\mathcal C_L^{\xi,\theta}$ and the set $\tilde C_L^\xi$ are disjoint.
Standard trigonometry allows to say that the maximal distance between a point in $\mathcal C_L^\xi\cap \mathcal H_L^{\theta}$ and $\mathcal H_L$
is $O(\theta^2L)$. }
\end{figure}
On the event $\mathcal A_L^{\theta,\xi}$,  one has $B_{ T_{\mathcal H^{\theta}_L}}\in(\mathcal H^{\theta}_L\cap \mathcal C_L^{\theta,\xi} )$.
Therefore, the right-hand side of \eqref{ztz} is smaller than
\begin{multline}
\bE\left[\exp\left(-\int_{0}^{T_{\mathcal H^{\theta}_L}} (V^{\go}(B_t)+\gl)
\dd t\right) \ind_{\mathcal A_L^{\theta,\xi}}\right]\\
\times
\max_{x\in \mathcal H^{\theta}_L\cap \mathcal C_L^{\theta,\xi}} \bE_{x} \left[
\exp\left(-\int_{0}^{T_{\mathcal H_L}} (V^{\go}(B_t)+\gl)\dd t\right)\ind_{\{\forall s\le T_{\mathcal H_L}, B_s \notin \tilde C_L^\xi\}}\right].
\end{multline}
The first term on the above product is equal to $Z^{R_{-\theta}(\go)}_L(\mathcal A_L^{\xi})$.
By the assumption one has on $V$ ($V\le \log L$ in the ball of radius $L^2$)
, the second term is larger than 
\begin{equation}
 \max_{x\in \mathcal H^{\theta}_L\cap \mathcal C_L^{\theta,\xi}} \bE_x \left[
e^{-T_{\mathcal H_L} (\log L+\gl)}\ind_{\{ \forall s\le T_{\mathcal H_L}, B_s \notin \tilde C_L^\xi, |B_s|\le L^2 \}}\right].
\end{equation}
Hence the  Lemma is proved if one can show that for all $x$ in $ \mathcal H^{\theta}_L\cap \mathcal C_L^{\theta,\xi}$
\begin{equation}
 \bE_x \left[
e^{-T_{\mathcal H_L} (\log L+\gl)}\ind_{\{\forall s\le T_{\mathcal H_L}, B_s \notin \tilde C_L^\xi, |B_s|\le L^2\}}\right] \ge
\exp(-2\sqrt{\log L}\theta^2 L).
\end{equation}
From our assumptions on $\theta$, for $L$ large enough one has

\begin{equation}\label{aa}
 \max_{x\in \mathcal H^{\theta}_L\cap \mathcal C_L^{\theta,\xi}} d(x, \mathcal H_L)=(L^{\xi}/2)\sin\theta+L(1-\cos\theta)\le L\theta^2,
\end{equation}
and

\begin{equation}\label{aaa}
  \min_{x\in \mathcal H^{\theta}_L\cap \mathcal C_L^{\theta,\xi}} d(x, \tilde C_L^\xi)\ge \sin \theta L - (1+\sqrt{d})L^{\xi}\ge L\theta^2,
\end{equation}
(these inequality comes from the assumption one has taken for $\theta$ and trigonometry).

If one consider a $d$-dimensional cube whose edges are parallel to the coordinate axis centered at $x$ and of side-length $d(x, \mathcal H_L)$, then with $\bP$ probability $1/2d$, the exit time of the cube for a Brownian Motion started from $x$ is equal to $T_{\mathcal H_L}$. Moreover, if $L$ is large enough then this cube does not intersect 
$\tilde C^\xi_L$ (cf.\ \eqref{aa} and \eqref{aaa})
and lies within the ball of radius $L^2$ .
Hence (using symmetries of the cube)

\begin{equation}
 \bE_x \left[
e^{-T_{\mathcal H_L} (\log L+\gl)}\ind_{\{\forall s\le T_{\mathcal H_L}, B_s \notin \tilde C_L^\xi, |B_s|\le L^2\}}\right]\ge 
\frac{1}{2d} \bE \left[e^{-\mathcal T_{d(x, \mathcal H_L)} (\log L+\gl)}\right],
\end{equation}
where
\begin{equation}
 \mathcal T_{r}:= \inf\{t\ge 0, \|B_t\|_{\infty}=r\},
\end{equation}
and $\|x\|_{\infty}=\max_{i=1..d} |x_i|$ is the $l_{\infty}$ norm on $\bbR^d$.
The hitting time  $\mathcal T_{r}$ is stochastically dominated by $\tau_r$ the first hitting time of $r$ by a one dimensional Brownian motion.
And one has
\begin{equation}
 \bP(\tau_r\le s)=2\int_{|x|>(r/\sqrt{s})}\frac{1}{\sqrt{2\pi}}e^{-x^2/2}\dd x.
\end{equation}
Hence one has for $L$ large enough
\begin{multline}
 \bE_x \left[
e^{-T_{\mathcal H_L} (\log L+\gl)}\ind_{\{\forall s\le T_{\mathcal H_L}, B_s \notin \tilde C_L^\xi \text{ and } |B_s|\le L^2\} }\right]
\\
\ge \frac{1}{2d} \bE\left[e^{-\tau_{L\theta^2}(\log L+\gl)}\right]> e^{-2 \theta^2 L\sqrt{\log L}}.
\end{multline}

\end{proof}
%
%

\begin{proof}[Proof of Lemma \ref{rotatruc}]

Note that $Z^{R_{i\theta}(\go)}_L(\mathcal A_L^{\xi})$, $i\in \{-N,\dots,N\}$ are identically distributed variables. 
However they are not exchangeable, and therefore the statement is not that obvious.
As
\begin{equation}
\sum_{k\in\{1,\dots,N\}} \bbP\left[Z^{R_{k\theta}\go}_L(\mathcal A_L^{\xi})> \max_{i\in\{1,\dots,N\}\setminus\{k\}}Z^{R_{i\theta}\go}_L(\mathcal A_L^{\xi})\right]\le 1
\end{equation} 
there exists some  $k_0\in \{1,N\}$ such that 
\begin{equation}
\bbP\left[Z^{R_{k_0\theta}\go}_L(\mathcal A_L^{\xi})> \max_{i\in\{1,\dots,N\}\setminus\{k_0\}} 
Z^{R_{i\theta}(\go)}_L(\mathcal A_L^{\xi})\right]\le \frac{1}{N}.
\end{equation}

Hence by rotational invariance of $\go$ and $V(\go)$ 
\begin{equation}
 \bbP\left[Z^{\go}_L(\mathcal A_L^{\xi}) > \max_{i\in\{1-k,\dots,N-k\}\setminus\{0\}} Z^{R_{i\theta}(\go)}_L(\mathcal A_L^{\xi})\right]\le \frac{1}{N}.
\end{equation}
\end{proof}

\subsection{Change of environment: Adding traps in $\bar C^{\xi}_L$}\label{addt}

With $\go$ we construct a second environment $\tilde \go$ that has more traps with  radius $\approx L^\xi$ in the region $\bar C_L^\xi$.
The aim of this section is to show that typical event for $\go$ are also typical for $\tilde \go$.

\medskip

We construct $\go$ and $\tilde \go$ on the same probability space and for convenience denote by $\bbP$ their joint probability.
Recall that  $\go$ is Poisson Point Process in $\R^d \times \bbR^+$ with intensity $\mathcal L \times \nu$.
Then define $\hat \go$ to be a Poisson Point Process on $\R^d \times\bbR^+$ independent of $\go$  with intensity
\begin{equation}
L^{-\frac{(d+1+\alpha)\xi+1}{2}} \tilde {\mathcal L},
\end{equation}
where $\tilde {\mathcal L}= \tilde {\mathcal L}(\xi,L)$ denotes the Lebesgue measure on the set
\begin{equation}
           \bar C_L ^\xi       \times [\sqrt{d} L^{\xi}, 2\sqrt{d} L^{\xi}].
\end{equation}
and define
\begin{equation}
 \tilde \go= \go+ \hat \go,
\end{equation}
which is a Poisson Point Process on $\R^d \times \bbR^+$ with intensity 
\begin{equation}
  \mathcal L \times \nu + L^{-\frac{(d+1+\alpha)\xi+1}{2}} \tilde {\mathcal L}.
\end{equation}

%
%
%
%

\begin{lemma}\label{mezchange}
Assume that
\begin{equation}
 \xi(d-1-\alpha)+1>0.
\end{equation}
Then, there exists a constant $C$ not depending on $L$ such that
for any event $A$ one has

\begin{equation}
 \bbP(\go \in A)\le C \sqrt{\bbP(\tilde \go \in A)}.
\end{equation}
 \end{lemma}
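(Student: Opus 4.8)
The plan is to compare the two Poisson laws directly via an absolute-continuity (Radon–Nikodym) computation. Since $\tilde\go = \go + \hat\go$ with $\hat\go$ an \emph{independent} Poisson Point Process on the bounded region $S := \bar C_L^\xi \times [\sqrt d L^\xi, 2\sqrt d L^\xi]$ with finite total intensity $m := L^{-\frac{(d+1+\alpha)\xi+1}{2}}\,\tilde{\mathcal L}(S)$, the key observation is that $\tilde\go$ restricted to $S^c$ equals $\go$ restricted to $S^c$, while on $S$ the process $\tilde\go$ is $\go|_S$ superposed with an independent Poisson cloud. So the law of $\tilde\go$ dominates the law of $\go$ on the event $\{\hat\go = \emptyset\}$, which has probability $e^{-m}$. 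Concretely: for any event $A$ (measurable w.r.t.\ the point configuration),
\begin{equation}
\bbP(\tilde\go \in A) \;\geq\; \bbP(\go \in A,\ \hat\go = \emptyset) \;=\; \bbP(\go\in A)\,\bbP(\hat\go=\emptyset) \;=\; e^{-m}\,\bbP(\go\in A),
\end{equation}
using independence of $\go$ and $\hat\go$ and the fact that on $\{\hat\go=\emptyset\}$ one has $\tilde\go=\go$.

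The second step is simply to show $m$ is bounded above uniformly in $L$, so that $e^{-m}\geq e^{-C'}=:c>0$. Here $\tilde{\mathcal L}(S) = \mathcal L(\bar C_L^\xi)\cdot(\sqrt d L^\xi) \asymp L \cdot (L^\xi)^{d-1}\cdot L^\xi = L^{1+\xi d}$, up to a constant depending only on $d$ (recall $\bar C_L^\xi = [L/2,L]\times[-L^\xi/2,L^\xi/2]^{d-1}$, of volume $\asymp L\cdot L^{\xi(d-1)}$, times an interval of length $\sqrt d L^\xi$). Hence
\begin{equation}
m \;\asymp\; L^{-\frac{(d+1+\alpha)\xi+1}{2}}\, L^{1+\xi d} \;=\; L^{\,\frac{1 + \xi(d-1-\alpha)}{2}},
\end{equation}
and the hypothesis $\xi(d-1-\alpha)+1>0$ is \emph{exactly} what is needed... no, wait: this gives $m$ growing like a positive power of $L$, not bounded. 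Let me re-read: the exponent $\frac{1+\xi(d-1-\alpha)}{2}$ is positive under the hypothesis, so $m\to\infty$ and $e^{-m}\to 0$ — that bound is too weak. The resolution is that one should not ask $\tilde\go$ to have \emph{no} extra points, but rather bound things via $\sqrt{\,\cdot\,}$: the correct tool is the inequality $\bbP(\go\in A)\leq e^{m/2}\sqrt{\bbP(\tilde\go\in A)}$ coming from a Cauchy–Schwarz / $L^2$ bound on the Radon–Nikodym derivative, which would need $m$ bounded too. So in fact the hypothesis must instead force the \emph{opposite} inequality on the exponent; I'd double-check, but the intended statement is surely that $\xi(d-1-\alpha)+1>0$ is used differently — actually re-examining, the factor $L^{-\frac{(d+1+\alpha)\xi+1}{2}}$ in the intensity is chosen precisely so that with $\tilde{\mathcal L}(S)\asymp L^{1+\xi d}$ one gets $m\asymp L^{\frac{1+\xi(d-1-\alpha)}{2}}$, and the relevant regime in the main proof has $\xi(d-1-\alpha)+1 \leq 0$ giving $m$ bounded; the lemma's hypothesis as printed may be a typo for $\xi(d-1-\alpha)+1 \le 0$, or (more likely) the density bound used is the $L^2$ one and I am misreading the sign. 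I would settle this by the Cauchy–Schwarz route below, which is robust.

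The clean argument giving the stated $\sqrt{\ }$: let $f = d\bbP_{\go}/d\bbP_{\tilde\go}$ be the likelihood ratio between the two Poisson laws (which are mutually absolutely continuous since $\go$ and $\tilde\go$ differ only by the bounded-intensity part $m$ on $S$; explicitly $f(\eta) = e^{m}\prod_{(y,r)\in\eta\cap S}\bigl(1 + \tfrac{\hat\nu}{\nu}(y,r)\bigr)^{-1}$-type expression, where $\hat\nu$ is the added intensity density). Then by Cauchy–Schwarz,
\begin{equation}
\bbP(\go\in A) = \bbE_{\tilde\go}\!\left[f\,\ind_A\right] \leq \bigl(\bbE_{\tilde\go}[f^2]\bigr)^{1/2}\bigl(\bbP(\tilde\go\in A)\bigr)^{1/2},
\end{equation}
and $\bbE_{\tilde\go}[f^2] = \bbE_{\go}[f] = $ a computable constant. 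For superposed Poisson processes this second-moment quantity is a Poisson-type exponential in $m$ and in an integral of $(\hat\nu/\nu)^2$; the point is that it is $O(1)$ precisely when the relevant exponent — governed by $\xi(d-1-\alpha)+1$ having the right sign and by the $L^{-(\cdots)/2}$ normalization being chosen so the square root of the ``natural'' scale $L^{1+\xi d}$ is cancelled — stays bounded, which is the content of the hypothesis. I would carry this out by writing $\bbE_\go[f]$ as $\exp\!\bigl(\int_S \tfrac{(\hat\nu)^2}{\nu+\hat\nu}\bigr)$-style expression (the standard formula for the chi-square-type divergence between a Poisson law and its perturbation), estimate $\nu(\{(y,r): r\in[\sqrt dL^\xi,2\sqrt dL^\xi]\})$ using $\nu([r,\infty])=r^{-\alpha}$ so that the density of $\nu$ near scale $L^\xi$ is $\asymp L^{-\xi(\alpha+1)}$, compare with $\hat\nu \asymp L^{-\frac{(d+1+\alpha)\xi+1}{2}}$, and check the integral over $S$ is bounded.

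\textbf{Main obstacle.} The only real work is the second-moment (chi-square divergence) computation and verifying its boundedness reduces exactly to the hypothesis $\xi(d-1-\alpha)+1>0$ (with whatever sign convention is truly intended); everything else — the Radon–Nikodym setup for superposed Poisson processes, Cauchy–Schwarz, and the volume estimate $\tilde{\mathcal L}(S)\asymp L^{1+\xi d}$ — is routine. I would be careful that the added intensity $\hat\nu$ is absolutely continuous with respect to $\nu\otimes\mathcal L$ restricted to $S$ (it is, since $\nu$ has a positive density on the compact radius-interval $[\sqrt d L^\xi, 2\sqrt d L^\xi]$), which is what makes $\bbP_\go$ and $\bbP_{\tilde\go}$ mutually absolutely continuous and the likelihood ratio well-defined.
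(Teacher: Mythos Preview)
Your final approach --- Cauchy--Schwarz on the Radon--Nikodym derivative $f=\dd Q/\dd\tilde Q$, followed by computing $\tilde Q[f^2]=Q[f]$ via the exponential formula for Poisson functionals --- is exactly what the paper does. What is missing in your write-up is not the strategy but the actual computation, and your confusion about the hypothesis stems from not having done it.

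To resolve the confusion: you correctly computed $m\asymp L^{\frac{(d-1-\alpha)\xi+1}{2}}\to\infty$ under the hypothesis; this is \emph{not} a typo. The point you are missing is that $m$ cancels. Writing $p(x,r)=\alpha r^{-\alpha-1}$ for the density of $\mathcal L\times\nu$ on $S$ and $q=L^{-\frac{(d+1+\alpha)\xi+1}{2}}$ for the added density, the identity you allude to is
\[
Q[f]=\exp\!\left(\int_S \frac{q^2}{p+q}\right)\le \exp\!\left(\int_S \frac{q^2}{p}\right),
\]
and a direct computation gives $\int_S q^2/p = q^2\cdot|\bar C_L^\xi|\cdot\alpha^{-1}\!\int_{\sqrt d L^\xi}^{2\sqrt d L^\xi} r^{\alpha+1}\dd r \asymp L^{-(d+1+\alpha)\xi-1}\cdot L^{1+(d-1)\xi}\cdot L^{(\alpha+2)\xi}=L^0=O(1)$: the exponent vanishes identically. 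So the $\chi^2$-divergence is bounded by \emph{design} of the intensity, and this is why your heuristic ``$e^{m/2}$'' bound is wrong --- the leading $e^m$ in $f$ is exactly compensated by the expectation of the product $\prod(1+\epsilon)^{-1}$.

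The hypothesis $\xi(d-1-\alpha)+1>0$ is equivalent to $q/p\to 0$ uniformly on $S$. The paper uses it to Taylor-expand $(1+q/p)^{-1}$ when computing the integral, but as the inequality above shows, it is not strictly needed to get $Q[f]=O(1)$. (The hypothesis is, however, essential elsewhere in the paper --- it guarantees the number of added traps $\mathcal N$ grows, which is what drives Proposition~\ref{plez}.) So: your plan is correct and matches the paper; just carry out the two-line integral and drop the speculation about sign errors.
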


Before going to the proof, we explain why the result holds:
For $\go$ the number of points in $\tilde C^\xi_L\times [\sqrt{d} L^{\xi}, 2\sqrt{d} L^{\xi}]$ is 
a Poisson variable of mean
\begin{equation}
 \frac{(1-2^{-\alpha})}{2}d^{-\alpha/2} L^{(d-1-\alpha)\xi+1}.
\end{equation}
The fluctuation around the mean are therefore of order $L^{\frac{(d-1-\alpha)\xi+1}{2}}$.
The number of points in process $\hat \go$ is a Poisson variable of mean $L^{-\frac{(d+1+\alpha)\xi+1}{2}}\times \frac{\sqrt{d} L^{d\xi+1}}{2}$ (intensity $\times$ volume).
Therefore the number of points one add to $\go$ to get $\tilde \go$ is of the same order as 
the fluctuation for the number of point of $\go$ in $\mathcal C_L^{\xi}$, and for that reason the two process 
should typically look the same.

\medskip

\noindent The result would not hold if $\hat \go$ had an intensity of a larger order.

\begin{proof}
Let $Q$ resp. $\tilde Q$ denote the law of $\go$ resp. $\tilde \go$ under $\bbP$.
For a function $f$, we denote by $Q(f)$ resp. $\tilde Q(f)$ expectation w.r.t $Q$ resp. $\tilde Q$. 
Note that $\tilde Q$ is absolutely continuous with respect to $Q$ and one has
\begin{equation}
\frac{\dd \tilde Q}{\dd Q}(\go)=\!\!\!\!\!\!  \prod_{\{(\go_i,r_i)\in \mathcal C'_L\times [\sqrt{d} L^{\xi}, 2\sqrt{d} L^{\xi}]\}} \!\!\! \!\!\! 
\left(1+(\alpha)^{-1} r_i^{1+\alpha}L^{-\frac{(d+1+\alpha)\xi+1}{2}}\right)
e^{-\frac{\sqrt{d}}{2}   L^{\frac{(d-1-\alpha)\xi+1}{2}}}.
\end{equation} 
For any event $A$ by Cauchy-Schwartz inequality, on has
\begin{equation}
Q(A)=\tilde  Q \left( \frac{\dd  Q}{\dd \tilde Q}\ind_A\right)\le \sqrt{ \tilde Q\left[\left( \frac{Q}{\tilde Q}\right)^2\right]}\sqrt{\tilde Q(A)}. 
\end{equation}
What is left to show is that the first term in the right-hand side remains bounded with $L$.
One has
\begin{multline}
Q\left( \frac{Q}{\tilde Q}\right):=\exp \left(\frac{\sqrt{d}}{2}   L^{\frac{(d-1-\alpha)\xi+1}{2}}\right)\\
\times Q\left(\prod_{\{(\go_i,r_i)\in \mathcal C'_L\times [\sqrt{d} L^{\xi}, 2\sqrt{d} L^{\xi}]\}} \frac{1}{
1+(\alpha)^{-1} r_i^{1+\alpha}L^{-\frac{(d+1+\alpha)\xi+1}{2}}}\right).
\end{multline}
And 
\begin{multline}
Q\left(\prod_{\{(\go_i,r_i)\in \mathcal C'_L\times [\sqrt{d} L^{\xi}, 2\sqrt{d} L^{\xi}]\}} \frac{1}{
1+(\alpha)^{-1} r_i^{1+\alpha}L^{-\frac{(d+1+\alpha)\xi+1}{2}}}\right)\\
=\exp\left(-\frac{L^{(d-1)\xi+1}}{2}\int_{\sqrt d  L^{\xi}}^{2\sqrt d  L^{\xi}}     \frac{L^{-\frac{(d+1+\alpha)\xi+1}{2}}\dd r}{
1+(\alpha)^{-1} r^{1+\alpha}L^{-\frac{(d+1+\alpha)\xi+1}{2}}}\right)  .
\end{multline}
Note that the quantity $r^{1+\alpha}L^{-\frac{(d+1+\alpha)\xi+1}{2}}$ is small uniformly in the domain of integration 
(by the assumption $\xi(d-1-\alpha)+1>0$) so that
\begin{multline}
 \int_{\sqrt d  L^{\xi}}^{2\sqrt d  L^{\xi}}     \frac{\dd r}{
1+(\alpha)^{-1} r^{1+\alpha}L^{-\frac{(d+1+\alpha)\xi+1}{2}}}\\
=\sqrt d  L^{\xi}-(1+o(1))L^{-\frac{(d+1+\alpha)\xi+1}{2}}\int_{\sqrt d  L^{\xi}}^{2\sqrt d  L^{\xi}}    \alpha^{-1}r^{1+\alpha} \dd r\\
=\sqrt d  L^{\xi}+O(L^{\frac{(\alpha+3-d)\xi-1}{2}})
\end{multline}
Putting everything together one gets
\begin{equation}
 Q\left( \frac{Q}{\tilde Q}\right)=\exp(O(1)).
\end{equation}

\end{proof}

\subsection{The effect of the change of measure}\label{tunapa}

In this section we estimate the 
difference between $\log Z_L^{\tilde \go}(\mathcal A_L^\xi)$ and  $\log Z_L^{\go}(\mathcal A_L^\xi)$.

\begin{proposition}\label{plez}
Suppose that 
\begin{equation}
(d-1-\alpha)\xi+1>0.
\end{equation}
Then, for any $\gep>0$, with probability tending to one when $N$ goes to infinity
\begin{equation}
\log Z_L^{\go}(\mathcal A_L^\xi) -\log Z_L^{\tilde \go}(\mathcal A_L^\xi) \ge L^{\frac{(d+1-\alpha-2\gamma)\xi+1}{2}-\gep}.
\end{equation}
\end{proposition}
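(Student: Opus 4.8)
The plan is to estimate the energy cost that the extra traps of $\hat\go$ impose on trajectories in $\mathcal A_L^\xi$. Write
\begin{equation}
\log Z_L^{\go}(\mathcal A_L^\xi)-\log Z_L^{\tilde\go}(\mathcal A_L^\xi)=-\log\frac{Z_L^{\tilde\go}(\mathcal A_L^\xi)}{Z_L^{\go}(\mathcal A_L^\xi)}=-\log \mu_L^{\go,\mathcal A_L^\xi}\!\left[\exp\!\left(-\int_0^{T_{\mathcal H_L}}\big(V^{\tilde\go}(B_t)-V^{\go}(B_t)\big)\dd t\right)\right],
\end{equation}
where $\mu_L^{\go,\mathcal A_L^\xi}$ is the law $\mu_L^\go$ conditioned on $\mathcal A_L^\xi$, and $V^{\tilde\go}-V^{\go}=\sum_{(\hat\go_j,\hat r_j)\in\hat\go}\hat r_j^{-\gamma}\ind_{\{|x-\hat\go_j|\le \hat r_j\}}\ge 0$ is supported in $\tilde C_L^\xi$. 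By Jensen's inequality applied to the convex function $e^{-x}$ (or simply $-\log \E e^{-X}\ge \E X - \text{(variance correction)}$; cleaner is to use $-\log\E e^{-X}\ge \E[X]$ only after a truncation, but Jensen alone gives a lower bound on $-\log$ of the average), I get the lower bound
\begin{equation}
\log Z_L^{\go}(\mathcal A_L^\xi)-\log Z_L^{\tilde\go}(\mathcal A_L^\xi)\ge \mu_L^{\go,\mathcal A_L^\xi}\!\left[\int_0^{T_{\mathcal H_L}}\big(V^{\tilde\go}(B_t)-V^{\go}(B_t)\big)\dd t\right].
\end{equation}
So it suffices to show that, with probability tending to one, the right-hand side is at least $L^{\frac{(d+1-\alpha-2\gamma)\xi+1}{2}-\gep}$.

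The next step is a lower bound on the amount of time a trajectory in $\mathcal A_L^\xi$ spends in the traps of $\hat\go$. Each trap of $\hat\go$ has radius in $[\sqrt d L^\xi,2\sqrt d L^\xi]$, hence contributes a factor $\hat r_j^{-\gamma}\asymp L^{-\gamma\xi}$ to the potential on a ball of volume $\asymp L^{d\xi}$. The number of such traps is Poisson with mean $L^{-\frac{(d+1+\alpha)\xi+1}{2}}\cdot\frac{\sqrt d}{2}L^{d\xi+1}=\frac{\sqrt d}{2}L^{\frac{(d-1-\alpha)\xi+1}{2}}$, which tends to infinity under the hypothesis $(d-1-\alpha)\xi+1>0$; with high probability this count is within a constant of its mean. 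I would fix attention on trajectories in $\mathcal A_L^\xi$: such a trajectory is confined to the tube $\mathcal C_L^\xi$ of cross-section $L^\xi$ and travels a distance of order $L$, so by standard estimates on Brownian occupation measure it spends time of order $L$ inside $\bar C_L^\xi$, and indeed the expected time (under $\mu_L^{\go,\mathcal A_L^\xi}$) it spends within any fixed trap of $\hat\go$ (a ball of radius $\asymp L^\xi$ lying inside the tube, at a macroscopic distance from the endpoints) is bounded below by a constant times $L^{2\xi}$ — roughly, the trajectory crosses the slab containing that ball, and while in the slab it is within the ball's transversal cross-section a positive fraction of the time. One has to be slightly careful that the presence of $V^\go$ in $\mu_L^{\go,\mathcal A_L^\xi}$ does not destroy this occupation estimate; here I would invoke the event $\max_{x\in[-L^2,L^2]^d}V^\go(x)\le\log L$ of Lemma~\ref{milor}/Lemma~\ref{troto}, on which the weight distortion over a region of diameter $L^\xi$ is at most $e^{O(L^\xi\log L)}$, a subexponential correction that still leaves the occupation time of order $L^{2\xi}$ after paying it against the gain — or, more robustly, compare $\mu_L^{\go,\mathcal A_L^\xi}$ with the conditioned free Brownian bridge in the tube and show the Radon–Nikodym density is not too degenerate. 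Multiplying the per-trap occupation time $\asymp L^{2\xi}$ by the per-trap potential value $\asymp L^{-\gamma\xi}$ and by the number of traps $\asymp L^{\frac{(d-1-\alpha)\xi+1}{2}}$ gives a total energy gain of order $L^{2\xi-\gamma\xi+\frac{(d-1-\alpha)\xi+1}{2}}=L^{\frac{(d+3-\alpha-2\gamma)\xi+1}{2}}$, which exceeds the target exponent $\frac{(d+1-\alpha-2\gamma)\xi+1}{2}$ by $\xi>0$; the slack absorbs the subexponential corrections and the $\gep$.

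The main obstacle is the occupation-time lower bound under the tilted measure $\mu_L^{\go,\mathcal A_L^\xi}$: one must control the trajectory law both against the killing by $V^\go$ and against the conditioning to stay in the tube $\mathcal C_L^\xi$, and show that a substantial fraction of trajectories genuinely pass through (not around) the newly-added traps. I would handle the confinement by a standard decomposition of the tube into blocks of length $\asymp L^\xi$ and a Markov-chain argument on the sequence of block-entrance points, showing the trajectory visits a positive density of blocks in a "generic" transversal position; and I would handle the killing by the uniform bound $V^\go\le\log L$ together with the observation that the relevant ratio of partition functions only needs a \emph{lower} bound on the occupation time in expectation, for which it is enough to exhibit one not-too-unlikely family of trajectories realizing the claimed occupation. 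Since the hypotheses of this Proposition (namely $(d-1-\alpha)\xi+1>0$) coincide with those under which the number of added traps diverges, and since the $L^\xi$ slack is comfortable, the bookkeeping should close; the delicate point is purely the quantitative occupation estimate, which I expect to occupy the bulk of the written proof.
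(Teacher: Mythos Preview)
Your argument has a genuine gap at the very first step: Jensen's inequality goes the other way. For a convex function $\phi(x)=e^{-x}$ one has $\mu[e^{-X}]\ge e^{-\mu[X]}$, hence $-\log\mu[e^{-X}]\le \mu[X]$, which is an \emph{upper} bound, not the lower bound you write. A lower bound on $-\log\mu[e^{-X}]$ cannot come from the expectation of $X$ alone; it requires $X$ to be large with high $\mu$-probability. Concretely, if even a tiny fraction of trajectories in $\mathcal A_L^\xi$ managed to avoid the extra traps, the ratio $Z_L^{\tilde\go}(\mathcal A_L^\xi)/Z_L^{\go}(\mathcal A_L^\xi)$ could stay bounded away from zero regardless of how large $\mu[X]$ is. This is precisely why the paper does not argue via expectations: it establishes a \emph{pathwise} occupation bound (Proposition~\ref{tecnicos}) showing that, except on an event of $\bar\mu^V$-probability at most $e^{-L^\xi}$, every trajectory spends at least $L^{\xi-\gep}$ units of time in each slab $[a,a+L^\xi]\times[-L^\xi/2,L^\xi/2]^{d-1}$. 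Combined with the key geometric observation that each added trap (radius $\ge \sqrt d\,L^\xi$, center in $\bar C_L^\xi$) contains such a slab entirely, this forces \emph{every} trajectory in $\mathcal A_L^\xi$ through every trap, and the bound $\mu[e^{-X}]\le e^{-c}+\mu(X<c)$ then does the job without any Jensen.

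A second, related error: your per-trap occupation time $L^{2\xi}$ is the diffusive scaling, but under $\mu_L^{\go,\mathcal A_L^\xi}$ the motion is ballistic (the $\gl$ in the killing rate makes the conditioned first coordinate travel at speed $\asymp\sqrt{2\gl}$), so the time spent in a slab of width $L^\xi$ is of order $L^\xi$, not $L^{2\xi}$. This is exactly what Proposition~\ref{tecnicos} gives, and it produces the target exponent $\frac{(d+1-\alpha-2\gamma)\xi+1}{2}$ on the nose: $\mathcal N$ traps, each contributing $\hat r_i^{-\gamma}\asymp L^{-\gamma\xi}$ over a time $L^{\xi-\gep}$, with $\mathcal N\asymp L^{\frac{(d-1-\alpha)\xi+1}{2}}$. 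Your ``extra $\xi$ of slack'' is illusory; it comes from the wrong occupation estimate and would not survive a correct argument. The paper implements all this by an interpolation $\go=\go^0,\go^1,\dots,\go^{\mathcal N}=\tilde\go$ adding one trap at a time and bounding each ratio $Z_L^{\go^i}(\mathcal A_L^\xi)/Z_L^{\go^{i-1}}(\mathcal A_L^\xi)$ from above using the pathwise occupation bound.
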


The idea of the proof is quite simple (see figure \ref{yty2}). Making the change of environment $\go\to \tilde \go$, we add roughly $L^{\frac{(d-1-\alpha)\xi+1}{2}}$ traps of radius 
larger than $\sqrt{d}L^\xi$.
The traps we add are wide enough so that every trajectory in $\mathcal A_L^\xi$ has to go through every one of them (this explains our choice of adding only traps of large radius).

\begin{figure}[hlt]
\begin{center}
\leavevmode 
\epsfxsize =14 cm
\psfragscanon
\psfrag{L2}{$L/2$}
\psfrag{L}{$L$}
\psfrag{R2}{$\R^{d-1}$}
\psfrag{R}{$\R$}
\psfrag{HL}{$\mathcal H_L$}
\psfrag{O}{$0$}
\epsfbox{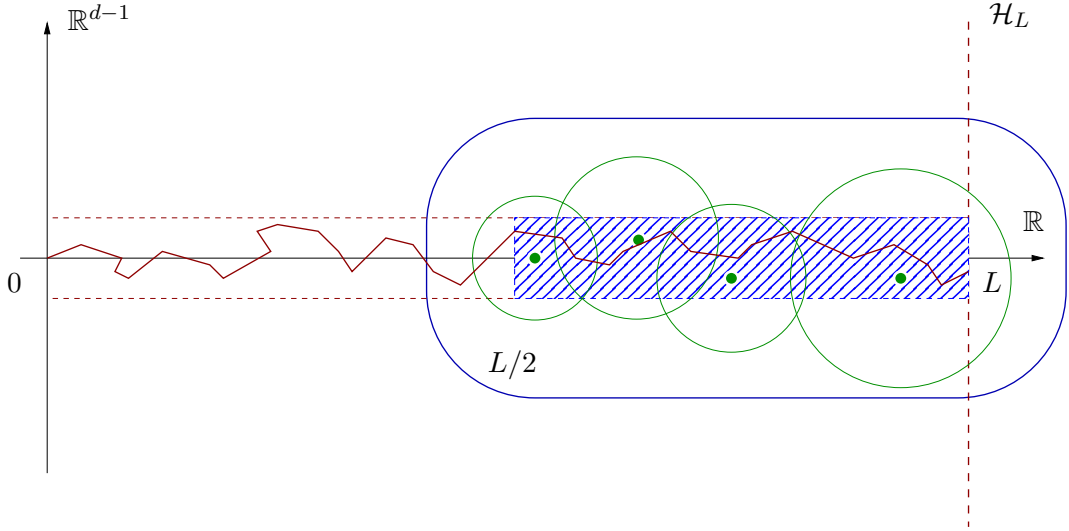}
\end{center}
\caption{\label{yty2}  The shadowed region is $\bar C_L^{\xi}$, and this is where $\go$ is modified.
The full line that encircles $\bar C_L^{\xi}$ denote the limit of $\tilde C_L^\xi$, the region where the values taken by $V^{\tilde \go}$ and $V^\go$ may differ.
A typical trajectory of $\mathcal A_L^{\xi}$  is represented. The four dots denote $\hat \go_i$, $i\in 1\dots 4$, the center of the traps that have been added.
The zone of influence of these traps $B(\hat \go_i, \hat r_i)$ are represented as circles.
The definition of $\hat \go$ implies that the radius of the traps are large enough to cover all the width of the tube $\mathcal C_L^\xi$ on a segment of length $L^\xi$.  
}
\end{figure}

\medskip

Under $\mu_L^{\go}$, trajectories are roughly ballistic, so that they should typically spend a time of order $L^\xi$ in each trap.
As the traps are of radius $\approx L^\xi$, they  modify the potential by  $L^{-\xi\gamma}$. Therefore, for most trajectories in $B\in\mathcal A_L^\xi$, one should have

\begin{multline}
\int_0^{T_{\mathcal H_L}} (V^{\tilde \go}-V^{\tilde \go})(B_t)\dd t = \int_0^{T_{\mathcal H_L}} (V^{\hat \go})(B_t)\dd t 
\\ \approx L^\xi \times L^{-\xi\gamma}\times \#\{\text{ traps in $\hat \go$ }\}\approx  L^{\frac{(d+1-\alpha-2\gamma)\xi+1)}{2}},
\end{multline}

which heuristically explains the result.

\medskip

To make this sketch rigorous, the main point is to give a proof of the fact that each trajectory spend a time of order $L^\xi$ in each trap.
This is the aim of Proposition \ref{tecnicos}.

For a given function $V: \R^d\to \R^+$ define the probability measure $\bar \mu^V$ by 
\begin{equation}
\frac{\dd \bar  \mu^V}{\dd \bP}(B):=  \frac{1}{Z^V_L(\mathcal A^{\xi}_L)}e^{-\int_{0}^{T_{\mathcal H_L}} (\gl+V(B_t))\dd t}\ind_{\mathcal A^{\xi}_L},
\end{equation}
where $Z^V_L(\mathcal A^{\xi}_L)$ is defined in the same way as $Z^{\go}_L(\mathcal A^{\xi}_L)$
with $V^\go$ replaced by $V$.

\medskip

Given $a$ in $[0,L]$ one wants to check that most trajectories of $\mathcal A^\xi_L$ spend a reasonable amount of time in
the slice of the tube
\begin{equation}
 [a,a+L^\xi]\times [-L^{\xi/2},L^{\xi}/2]^{d-1}.
\end{equation}
Let $B^{(1)}$ denote the first coordinate of $B$.

\begin{proposition}\label{tecnicos}
For any non-negative function $V$ such that $V(x)\le \log L$ for all $x$ such that $|x| \le L^2$, for any $\gep>0$
for $L$ large enough,  and for any $a\in [0,L-L^{\xi}]$,
\begin{equation}
\bar \mu^V \left(\int_{0}^{T_{\mathcal H_L}} \ind_{\{ B^{(1)}_t\in  [a,a+L^{\xi}]\}}\dd t  \le L^{\xi-\gep}\right)\le e^{-L^{\xi}}.
\end{equation}
\end{proposition}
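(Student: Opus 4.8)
\textbf{Proof plan for Proposition \ref{tecnicos}.} The statement asserts that under the conditioned (and tube-restricted) measure $\bar\mu^V$, a trajectory cannot cross the slab $[a,a+L^\xi]\times\R^{d-1}$ too quickly: it must spend time at least $L^{\xi-\gep}$ there, except on an event of probability at most $e^{-L^\xi}$. The natural strategy is a first-moment / partition-function comparison: I would bound the numerator $Z^V_L\big(\mathcal A^\xi_L\cap\{\int_0^{T_{\mathcal H_L}}\ind_{\{B^{(1)}_t\in[a,a+L^\xi]\}}\dd t\le L^{\xi-\gep}\}\big)$ from above, bound the denominator $Z^V_L(\mathcal A^\xi_L)$ from below, and show the ratio is at most $e^{-L^\xi}$. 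Both bounds will use only the crude pointwise control $V\le\log L$ on $|x|\le L^2$ (so that the Feynman–Kac weight is, up to the harmless factor $e^{-(\gl+\log L)T_{\mathcal H_L}}$, comparable to the survival probability of plain Brownian motion constrained to the tube).

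\textbf{Lower bound on the denominator.} Here I would exhibit one cheap family of trajectories: a Brownian motion that travels essentially straight down the tube $\mathcal C^\xi_L$, reaching $\mathcal H_L$ in time $O(L)$, staying in the ball of radius $L^2$. The probability that Brownian motion stays in a tube of width $L^\xi$ and reaches distance $L$ in time $\sim L$ is $\exp(-O(L^{1-2\xi}\cdot\log\text{-corrections}))$ by standard small-ball / eigenvalue estimates for the Laplacian on the cross-section (lowest Dirichlet eigenvalue $\sim L^{-2\xi}$, run for time $L$, giving $\exp(-c L^{1-2\xi})$; since $\xi<1$ this dominates all $e^{-L^\xi}$-type terms only when $1-2\xi<\xi$, so I should be careful — in fact one wants $\exp(-O(L^{(1-2\xi)\vee 0}))\cdot e^{-O(L)\log L}$ for the straight-crossing cost, and since the proposition is only invoked for $\xi$ in the superdiffusive range this is fine). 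The dominant term is $e^{-(\gl+\log L)\cdot O(L)}=e^{-O(L\log L)}$, so $Z^V_L(\mathcal A^\xi_L)\ge e^{-C L\log L}$.

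\textbf{Upper bound on the numerator.} This is the crux. On the event in question, the trajectory spends total time at most $L^{\xi-\gep}$ with its first coordinate in $[a,a+L^\xi]$. Since the trajectory is continuous and must get from $B^{(1)}=a$ to $B^{(1)}=a+L^\xi$ (it starts at the origin with first coordinate $0\le a$ and ends at first coordinate $L>a+L^\xi$, using $a\in[0,L-L^\xi]$), I would use the strong Markov property at the entrance and exit times of the slab and bound the contribution of the slab-crossing piece by $\sup_{y}\bP_y(\tau\le L^{\xi-\gep})$ where $\tau$ is the time to traverse a width-$L^\xi$ slab. A one-dimensional Brownian motion (the first coordinate) needs time of order $(L^\xi)^2=L^{2\xi}$ to move distance $L^\xi$, and crossing in time $\le L^{\xi-\gep}$ costs $\exp(-c L^{2\xi}/L^{\xi-\gep})=\exp(-c L^{\xi+\gep})$ by the Gaussian tail $\bP(\tau_r\le s)\le 2\exp(-r^2/(2s))$ already recorded in the proof of Lemma \ref{milor}. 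Multiplying this small factor against the trivial upper bounds $\le 1$ (or $\le e^{-\gl T}$, $T\ge0$) for the other two pieces of the trajectory, the numerator is at most $\exp(-cL^{\xi+\gep})\cdot Z^V_L$-type quantities which themselves are $\le 1$. Taking the ratio: numerator/denominator $\le e^{-cL^{\xi+\gep}}\cdot e^{C L\log L}$.

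\textbf{The main obstacle.} The tension is that the crude lower bound on $Z^V_L(\mathcal A^\xi_L)$ carries a factor $e^{-CL\log L}$ from the $\gl+\log L$ running cost over time $O(L)$, whereas the gain from the fast crossing is only $e^{-cL^{\xi+\gep}}$, and $L^{\xi+\gep}<L\log L$. So a naive global comparison fails. The fix — and the real content of the argument — is to \emph{localize}: one should not compare with the full partition function but argue slab-by-slab, i.e. condition on the pieces of the path outside $[a,a+L^\xi]$ and show that, \emph{for a typical such outside-configuration}, replacing the in-slab piece by a slow (time $\sim L^{\xi}$) passage rather than a fast (time $\le L^{\xi-\gep}$) one costs at most $e^{c L^{\xi-\gep}\log L}$ extra running weight while gaining an entropic factor; concretely, via the strong Markov property at the hitting times of the two faces of the slab, one writes $\bar\mu^V(\text{fast crossing})$ as an average over the entrance point of $\bP_x(\tau_{\text{slab}}\le L^{\xi-\gep})/\bP_x(\text{survive the slab with weight})$, and the denominator is bounded below by $\exp(-c L^\xi\log L)$ (cost of a time-$L^\xi$ traversal, which is $\gg$ the fast-crossing probability since $L^{\xi}\log L\ll L^{\xi+\gep}$ is false — so one instead picks the comparison traversal time to be exactly $L^{\xi-\gep/2}$, say, giving denominator $\ge\exp(-cL^{\xi-\gep/2}\log L - c L^{\xi/2+\gep/4})$ against numerator $\le\exp(-cL^{\xi+\gep/2})$). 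Making these exponents line up — fast-crossing penalty $L^{\xi+\gep'}$ beating the localized entropic/energetic cost $L^{\xi}\,\mathrm{polylog}$, which requires the $\gep$ in the statement to absorb the polylog and the comparison to be genuinely local rather than global — is the delicate bookkeeping the full proof must carry out.
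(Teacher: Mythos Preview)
Your localized strategy is the paper's: it defines $T_1$ as the first hit of the midplane $B^{(1)}=a+L^\xi/2$, $T_2$ as the subsequent exit from $[a,a+L^\xi]$, conditions on $U_1=B_{T_1}=x$ and $U_2=B_{T_2}=y$, and bounds the conditional ratio $R(x,y)/Q(x,y)$ (fast versus slow crossing) uniformly in $x,y$. You also correctly identify the fast-crossing cost as $\exp(-cL^{\xi+\gep})$ via the one-dimensional Gaussian tail. One minor point: your arithmetic at the end is backwards. For any fixed $\gep>0$ one has $L^\xi\log L\ll L^{\xi+\gep}$ for large $L$, so the direct comparison (slow crossing in time of order $L^\xi$, paying at most $e^{cL^\xi\log L}$ in running weight, against fast-crossing penalty $e^{-cL^{\xi+\gep}}$) already yields a ratio $\le e^{-L^\xi}$. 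There is no need for an intermediate time scale.

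The genuine gap is the tube boundary. You assert ``the denominator is bounded below by $\exp(-cL^\xi\log L)$'', but this cannot hold uniformly in the conditioning points: if the transverse part $\bar x$ (or $\bar y$) of the entry (or exit) point lies within distance $\gd$ of $\partial[-L^\xi/2,L^\xi/2]^{d-1}$, then \emph{any} crossing constrained to stay in $\mathcal C^\xi_L$ has probability $O(\gd)$, so your lower bound on $Q$ degenerates. The paper flags this explicitly as the main technicality and resolves it with Dirichlet heat-kernel estimates on the cross-section (Lemma~\ref{diric}): both $R(x,y)$ and $Q(x,y)$ are shown to carry the \emph{same} boundary factor $A(\bar x,\bar y)=\prod_i\min(\bar x_i+L^\xi/2,L^\xi/2-\bar x_i)\min(\bar y_i+L^\xi/2,L^\xi/2-\bar y_i)$, coming respectively from the bounds $p^*_t(\bar x,\bar y)\le A(\bar x,\bar y)/t^2$ and $p^*_t(\bar x,\bar y)\ge A(\bar x,\bar y)e^{-L^{2\xi}/t-\pi^2 t/2}$. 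This factor then cancels in the ratio, giving $R/Q\le e^{-L^{\xi+\gep}/16+2L^\xi\log L}\le e^{-L^\xi}$ uniformly. Without isolating and cancelling this boundary factor your localized argument does not close.
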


\begin{rem}\rm
 The result above simply states that under the polymer measure, the cost for the motion to be superbalistic is roughly the same as for Brownian-Motion.
Even though the statement is quite natural, the proof we present contains some technicalities due to the fact the Brownian motion is 
confined in a tube and one has to control what happens close to the boundary. However the underlying idea is quite simple.
\end{rem}

We postpone the proof of this statement at the end of the section and use it to prove Proposition \ref{plez}.

Let $\mathcal N$ be the number of point in $\hat \go$ (this a Poisson variable of mean $(\sqrt{d}/2)L^{\frac{(d-1-\alpha)\xi+1}{2}}$).
We choose to index them in an arbitrary way so that one can write
\begin{equation}
\hat\go:=\{(\hat\go_kn,\hat r_k),\ k\in \{1,\dots,\mathcal N\}\}.
\end{equation}
For $i\in\{0,\dots, \mathcal N\}$,
define
\begin{equation}
\go^i:=\go \cup \{ (\hat \go_k,\hat r_k), k\in \{1,\dots,i\}\}
\end{equation}
Note that $\go^0=\go$ and $\go^{\mathcal N}:= \tilde\go$, so that $(\go^i)_{0\le i\le N}$ is an interpolating sequence between $\go$ and $\tilde \go$.

\begin{lemma} Given $\gep>0$,
 there exists a constant $c$ such that for all $L$ large enough, 
 for every environment $\tilde \go$ that satisfies 
 \begin{equation}
 \forall |x|\le L^2, V^{\tilde \go}(x)\le \log L,
 \end{equation}
  for all $i\in\{1,\dots, \mathcal N\}$
\begin{equation}\label{possey}
 Z^{\go^i}_{L}(\mathcal A^\xi_L)\le  Z^{\go^{i-1}}_{L}(\mathcal A^\xi_L) e^{-cL^{\xi(1-\gamma)-\gep}}.
\end{equation}
As a consequence 
\begin{equation}\label{poisse}
 Z^{\tilde\go}_{L}(\mathcal A^\xi_L)\le \exp(-c\mathcal N L^{\xi (1-\gamma)-\gep})Z^{\go}_{L}(\mathcal A^\xi_L).
\end{equation}
\end{lemma}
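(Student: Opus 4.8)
The statement to prove is the Lemma containing \eqref{possey} and \eqref{poisse}, which feeds into Proposition \ref{plez}. The plan is to interpolate between $\go$ and $\tilde\go$ one trap at a time, using the bound in Proposition \ref{tecnicos} on the time a typical trajectory in $\mathcal A_L^\xi$ spends in each slab of the tube. The key observation is that $\go^i$ differs from $\go^{i-1}$ by a single trap centered at $\hat\go_i$ with radius $\hat r_i\in[\sqrt d L^\xi,2\sqrt d L^\xi]$, so that
\begin{equation}
V^{\go^i}(x)=V^{\go^{i-1}}(x)+\hat r_i^{-\gamma}\ind_{\{|x-\hat\go_i|\le \hat r_i\}},
\end{equation}
and hence
\begin{equation}
Z^{\go^i}_L(\mathcal A_L^\xi)=Z^{\go^{i-1}}_L\!\left(\mathcal A_L^\xi;\ \exp\Big(-\hat r_i^{-\gamma}\!\!\int_0^{T_{\mathcal H_L}}\!\!\ind_{\{|B_t-\hat\go_i|\le \hat r_i\}}\dd t\Big)\right).
\end{equation}

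\textbf{Main steps.} First I would check the geometric fact that, because $\hat\go_i\in\bar C_L^\xi$ and $\hat r_i\ge\sqrt d L^\xi$, the ball $B(\hat\go_i,\hat r_i)$ contains a full slab $[a_i,a_i+L^\xi]\times[-L^\xi/2,L^\xi/2]^{d-1}$ of the tube $\mathcal C_L^\xi$ for a suitable $a_i\in[0,L-L^\xi]$; this is exactly what the choice of radii in Section \ref{addt} was designed to guarantee (cf.\ Figure \ref{yty2}). Consequently, for every $B\in\mathcal A_L^\xi$,
\begin{equation}
\int_0^{T_{\mathcal H_L}}\ind_{\{|B_t-\hat\go_i|\le\hat r_i\}}\dd t\ \ge\ \int_0^{T_{\mathcal H_L}}\ind_{\{B^{(1)}_t\in[a_i,a_i+L^\xi]\}}\dd t.
\end{equation}
Second, I would split $Z^{\go^{i-1}}_L(\mathcal A_L^\xi)$ according to whether this occupation time is at least $L^{\xi-\gep'}$ or not (for a small $\gep'$ related to $\gep$). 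On the first part, the extra exponential weight is at most $\exp(-\hat r_i^{-\gamma}L^{\xi-\gep'})\le \exp(-c L^{\xi(1-\gamma)-\gep})$ using $\hat r_i\le 2\sqrt d L^\xi$; on the second part, Proposition \ref{tecnicos} applied to $V=V^{\go^{i-1}}$ (which satisfies $V^{\go^{i-1}}\le V^{\tilde\go}\le\log L$ on $\{|x|\le L^2\}$ by monotonicity, since $\go^{i-1}\subset\tilde\go$) bounds the contribution by $e^{-L^\xi}Z^{\go^{i-1}}_L(\mathcal A_L^\xi)$, which is negligible compared with $e^{-cL^{\xi(1-\gamma)-\gep}}Z^{\go^{i-1}}_L(\mathcal A_L^\xi)$ as long as $\xi(1-\gamma)<\xi$, i.e.\ $\gamma>0$. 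Adding the two pieces gives \eqref{possey}. Third, iterating \eqref{possey} over $i=1,\dots,\mathcal N$ immediately yields \eqref{poisse}.

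\textbf{Main obstacle.} The genuinely delicate point is ensuring that Proposition \ref{tecnicos} can be applied with $V=V^{\go^{i-1}}$ uniformly over all $i$ and along the whole interpolation, i.e.\ that each intermediate potential still satisfies the hypothesis $V(x)\le\log L$ for $|x|\le L^2$: this is why the lemma is stated under the assumption $V^{\tilde\go}(x)\le\log L$, and one uses $V^{\go^{i-1}}\le V^{\go^{\mathcal N}}=V^{\tilde\go}$. A secondary subtlety is that the "bad" set in Proposition \ref{tecnicos} must be measured under $\bar\mu^{V^{\go^{i-1}}}$, so the failure event has probability $\le e^{-L^\xi}$, and one must make sure that summing this small error over the $\mathcal N\approx L^{((d-1-\alpha)\xi+1)/2}$ steps is still harmless; since $\mathcal N$ is only polynomial in $L$ while $e^{-L^\xi}$ is superpolynomially small, this causes no problem. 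Finally one should keep track of the bookkeeping between $\gep'$ and $\gep$ so that the exponent $\xi(1-\gamma)-\gep$ that appears is the one used in Proposition \ref{plez} (where $\mathcal N L^{\xi(1-\gamma)}\approx L^{((d+1-\alpha-2\gamma)\xi+1)/2}$).
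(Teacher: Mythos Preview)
Your proposal is correct and follows essentially the same route as the paper: write the ratio $Z^{\go^i}_L(\mathcal A^\xi_L)/Z^{\go^{i-1}}_L(\mathcal A^\xi_L)$ as a $\bar\mu^{V^{\go^{i-1}}}$-expectation of the extra Gibbs weight from the single added trap, use the geometric inclusion of a full slab $[a_i,a_i+L^\xi]\times[-L^\xi/2,L^\xi/2]^{d-1}$ in $B(\hat\go_i,\hat r_i)$, split on the slab occupation time, and invoke Proposition~\ref{tecnicos} with the observation $V^{\go^{i-1}}\le V^{\tilde\go}\le\log L$. The only (harmless) discrepancy is that the bound \eqref{possey} is deterministic under the stated hypothesis, so no accumulation-of-errors argument over $\mathcal N$ steps is actually needed when iterating.
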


\begin{proof}

As $\go_i$ lies in $\bar C_L^\xi$ and $\hat r_i\ge \sqrt{d}L^{\xi}$, there exists $a_i\in(L/2,L-L^\xi)$ such that
\begin{equation}
 \left[a_i,a_i+L^{\xi}\right]\times[-L^{\xi}/2,L^\xi/2]^{d-1}\subset \mathcal B(\hat \go_i,
\hat{r_i}).
\end{equation}

By assumption, for $|x|\le L^2$ and for all $i\le \mathcal N$,   $V^{\go^i}(x)\le V^{\tilde\go}(x)\le \log L$.
Therefore one can apply Proposition \ref{tecnicos}. Using  $\hat r_i\le 2\sqrt{d}L^{\xi}$ one gets
 
\begin{multline}
\frac{Z^{\go^i}_{L,\gb}(\mathcal A^\xi_L)}{Z^{\go^{i-1}}_{L,\gb}(\mathcal A^\xi_L)}=
\bar \mu^{V^{\go^{i-1}}} \left(e^{-\int_{0}^{T_{\mathcal H_L}} \hat r_i^{-\gamma} \ind_{\{B_t\in\mathcal B(\hat \go_i,
\hat r_i) \}}\dd t} \right)\\
\le \bar \mu^{V^{\go^{i-1}}} \left(e^{-\int_{0}^{T_{\mathcal H_L}}  (2\sqrt{d}L^{\xi})^{-\gamma} \ind_{\{B_t^{(1)}\in  \left[a_i,a_i+L^{\xi}\right] \}}\dd t}\right)\\
\le  \bar \mu^{V^{\go^{i-1}}} \left(\int_{0}^{T_{\mathcal H_L}} \ind_{\{B_t^{(1)}\in  \left[a_i,a_i+L^{\xi}\right]\dd t \le L^{\xi-\gep} \}}\right)+e^{-L^{\xi-\gep}(2\sqrt{d}L^{\xi})^{-\gamma}}\\
\le  e^{-L^{\xi}}+e^{-L^{\xi-\gep}(2\sqrt{d}L^{\xi})^{-\gamma}}                 \le e^{-cL^{\xi(1-\gamma)-\gep}}.
\end{multline}

\end{proof}

Proposition \ref{plez} is immediate consequence of \eqref{poisse} and the fact that $\mathcal N$ is a Poisson variable of 
mean $(\sqrt{d}/2) L^{\frac{(d-1-\alpha)\xi+1}{2}}$.

\begin{proof}[Proof of Proposition \ref{tecnicos}]

Define
\begin{equation}\begin{split}
T_1&:=\inf\{t\ge 0 \ : \  B^{(1)}_t=a +L^{\xi}/2\}, \\
 T_2&:=\inf\{t\ge T_1 \ : \ |B^{(1)}_t-(a+L^{\xi}/2)|=L^\xi/2\}.
\end{split}\end{equation}
For all $t\in [T_1,T_2]$, $B^{(1)}_t\in [a,a+L^{\xi}]$ and thus it is sufficient to prove that with large $\bar \mu^V$ 
probability $T_2-T_1$ is large. 
Set
\begin{equation}
U_1:=B_{T_1} \quad \text{ and } \quad  U_2:=B_{T_2}. 
\end{equation}
Note that conditionally on $U_1=x$, $U_2=y$, the law of $(B_t)_{t\in[T_1,T_2]}$ under $\bar\mu^V$
is independent of the rest of the motion, and that (recall that $\bP_x$ is the law of a standard motion started from $x$)

\begin{multline}\label{ReQ}
 \bar \mu^V \left(T_2-T_1
\le L^{\xi-\gep} \ | \ U_1=x ;\  U_2=y \right)\\
= \frac{
\bE_x\left[e^{-\int_{0}^{T_{2}} (\gl+V(B_t))\dd t}\ind_{\{\forall t\in(0,T_2), B_t\in \mathcal C_L\}}\ | \ B_{T_2}=y\right]}
{\bE_x\left[e^{-\int_{0}^{T_{2}} (\gl+V(B_t))\dd t}\ind_{\{\forall t\in(0,T_2), B_t\in \mathcal C_L\}}\ | \ B_{T_2}=y\right]}
=:\frac{R(x,y)}{Q(x,y)}.
\end{multline}
We are to show that this is small uniformly in the choice of $x$ and $y$.
Our way to estimate both term in the fraction is  to suppress inhomogeneity due to the potential $V$: using the assumption
$0\le V\le \log N$ we can replace $V(B_t)$ by $0$ in $R$ and by $\log L$ in $Q$.

Let $\bar x$ resp.\ $\bar y$ be 
the projection of the $d-1$ last coordinate of $x$ resp.\ y on $\bbR^{d-1}$, and 
let $p^*_t(\cdot,\cdot)$ denote the heat kernel on $[-L^{\xi/2},L^{\xi/2}]^{d-1}$ with Dirichlet boundary condition.
One has

\begin{equation}\begin{split}
R(x,y)&\le \int_{(0,L^{\xi-\gep})}p_t^*(\bar x,\bar y)e^{-\gl t}\bP_x(T_2\in \dd t).\\ 
Q(x,y)&\ge \int_{(0,\infty)}p_t^*(\bar x,\bar y)e^{-(\gl+\log L) t} \bP_x(T_2\in \dd t).
\end{split}\end{equation}

 Note that uniformly on $t\le L^{2\xi-\gep}$, when $L$ gets large
\begin{multline}
\bP_x(T_2\le t)=
\bP(\max_{s\in [0,1]} |B_s^{(1)}|\ge L^{\xi}/(2\sqrt{t}))=2(1+o(1))\bP(\max_{s\in [0,1]} B_s^{(1)}\ge L^{\xi}/(2\sqrt{t}))\\
=\frac{4(1+o(1))}{\sqrt{2\pi}}\int_{L^{\xi}/(2\sqrt{t})}^\infty e^{-\frac{u^2}{2}}\dd u
=\frac{8\sqrt{t}(1+o(1))}{L^{\xi}\sqrt{2\pi}}
e^{-\frac{L^{2\xi}}{8t}}.
\end{multline}

Using the estimates on  heat-kernel  from the Appendix (Lemma \ref{diric})
one obtains by using integration by part that for large $L$
\begin{multline}\label{air}
R(x,y)\le A(\bar x,\bar y)\int_{(0,L^{\xi-\gep})}\frac{1}{t^2} \bP_x(T_2\in \dd t)\\
=(1+o(1)) A(\bar x,\bar y) 
\int_{0}^{L^{\xi-\gep}} \frac{16}{t^{5/2}\sqrt{2\pi}L^{\xi}}e^{-\frac{L^{2\xi}}{8t}} \dd t
\le A(\bar x,\bar y) e^{-L^{\xi+\gep}/16}
\end{multline}
and that
\begin{multline}\label{ikik}
Q(x,y)\ge A(\bar x,\bar y)\int_{0}^{\infty} e^{-(\gl+\log L)t}  e^{-L^{2\xi}{t}-\frac{\pi^2}{2 t}} \bP_x(T_2\in \dd t)
\\ \ge  A(\bar x,\bar y) \int_{L^{\xi}/2}^{L^{\xi}} e^{-(\gl+\log L)t}  e^{-\frac{L^{2\xi}}{t}-\frac{\pi^2t}{2}} \bP_x(T_2\in \dd t)\dd t
\\ \ge A(\bar x,\bar y)e^{-2 L^{\xi}(\log L)}.
 \end{multline}

 where
 \begin{equation}
 A(u,v):=  \prod_{i=1}^{d-1} \min\left( (u_i+L^{\xi/2}), (L^{\xi/2}-u_i)\right)
 \min\left( (v_i+L^{\xi/2}), (L^{\xi/2}-v_i)\right).
 \end{equation}
which (recall \eqref{ReQ}) gives the result.

\end{proof}

\subsection{Proof of Theorem \ref{superdiff}}

Now we can use the results of all the previous sections to get the main Theorem.
Consider $\xi<\bar\xi(d,\alpha,\gamma)$. One can check that it satisfies both
\begin{equation}\label{condit}\begin{split}
\frac{(d+1-\alpha-2\gamma)\xi+1}{2}&> 2\xi-1.\\
(d-1-\alpha)\xi+1&>0.
\end{split}
\end{equation}
Then with probability going to one (cf. Proposition \ref{preswe}), one has

\begin{equation}
\log Z_L^{\go}(\mathcal A_L^{\xi})\le \log Z_L^{\go}(\cB_L^{\xi})+L^{2\xi-1}(\log L)^3.
\end{equation}
Then combining this with Proposition \ref{plez} and  $Z_L^{\go}(\cB_L^{\xi})=Z_L^{\tilde \go}(\cB_L^{\xi})$ one get that with probability tending to one

\begin{equation}
\log Z_L^{\tilde \go}(\mathcal A_L^{\xi})\le \log Z_L^{\tilde \go}(\cB_L^{\xi})+L^{2\xi-1}(\log L)^3-L^{\frac{(d+1-\alpha-2\gamma)\xi+1}{2}-\gep}.
\end{equation}
Hence using \eqref{condit} and choosing $\gep$ small enough, one gets that with probability tending to one when $L\to \infty$ 
\begin{equation}
\mu_L^{\tilde \go}(\mathcal A_L^{\xi})\le \exp(-L^{\frac{(d+1-\alpha-2\gamma)\xi+1)}{2}-\gep}/2).
\end{equation}

Using Lemma \ref{mezchange} for the event $A:=\{ \go \ | \  \mu_L^{\go}(\mathcal A_L^{\xi})> \exp(-L^{\frac{(d+1-\alpha-2\gamma)\xi+1)}{2}-\gep}/2)\}$
one gets that
\begin{equation}
\lim_{L\to \infty} \bbP\left[ \mu_L^{\go}(\mathcal A_L^{\xi})> \exp(-L^{\frac{(d+1-\alpha-2\gamma)\xi+1}{2}}-\gep/2)\right]=0.
\end{equation}
Which ends the proof.

 \qed

{\bf Acknowledgements:} The author would like to thank to A.S.\ Sznitman 
for bringing to his attention the work of M. W\"uhtrich \cite{cf:W1,cf:W2,cf:W3} 
which was a great source of inspiration for the present paper as well as the anonymous referee, for his suggestion to 
improve and shorten
the proof of Proposition \ref{tecnicos}. 
This work was initiated during the authors stay in Universit\`a di Roma Tre 
under the support of European Research Council through the advanced grant PTRELSS 228032, 
he acknowledges kind hospitality and support.

\appendix

\section{Estimates}

\begin{lemma}\label{troto}
For all $L$ large enough
\begin{equation}
\bbP\left[ \max_{x\in[-L^2,L^2]^d} V^{\go}(x)> \log L\right] \le 1/L. 
\end{equation}
\end{lemma}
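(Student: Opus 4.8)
The plan is to bound the probability that the potential $V^{\go}$ exceeds $\log L$ somewhere in the large box $[-L^2,L^2]^d$ by a union bound together with a careful count of the traps that can contribute a large value. First I would observe that for a fixed point $x$, $V^\go(x) = \sum_i r_i^{-\gga}\ind_{\{|x-\go_i|\le r_i\}}$, so a trap $(\go_i,r_i)$ contributes to $V^\go(x)$ only if $\go_i\in B(x,r_i)$, and it then contributes $r_i^{-\gga}$. The key structural remark is that small-radius traps contribute tiny amounts: a trap of radius $r$ contributes at most $r^{-\gga}$, which is only sizeable when $r$ is small, but small-radius traps are spatially sparse because the Poisson intensity in the $r$-variable near a given scale $r$ is of order $r^{-\alpha-1}\dd r$. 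So one splits the radii dyadically: for $r\in[2^{-k-1},2^{-k}]$ the contribution per trap is $\le 2^{(k+1)\gga}$, while the expected number of such traps whose ball covers a fixed point $x$ is $\asymp \int_{2^{-k-1}}^{2^{-k}} r^d \, r^{-\alpha-1}\dd r \asymp 2^{-k(d-\alpha)}$ (using $\alpha+\gga-d>0$, hence in particular the relevant tail integrals converge).

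The main step is then a quantitative estimate: to have $V^\go(x)>\log L$ one needs either (a) some very small trap of radius $r\le (\log L)^{-2/\gga}$, say, covering $x$ — but having $|x|\le L^2$ and a trap of radius $\le\epsilon$ covering it means there is a Poisson point in $B(x,\epsilon)\times[0,\epsilon]$, and the expected number of Poisson points with radius $\le \epsilon$ in the enlarged box $[-2L^2,2L^2]^d$ is $\asymp L^{2d}\int_0^\epsilon r^d r^{-\alpha-1}\dd r \asymp L^{2d}\epsilon^{d-\alpha}$, which is polynomial in $L$ — or (b) an anomalously large number of moderate-radius traps (radii in a bounded range away from $0$) simultaneously covering $x$, which is a large-deviation event for a Poisson random variable with bounded mean. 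I would handle (b) by covering $[-L^2,L^2]^d$ by $O(L^{2d})$ unit balls (the radii bounded away from zero means the field $V^\go$ restricted to moderate traps is, up to constants, Lipschitz-controlled at unit scale, or more simply: partition into unit cubes and note that the number of moderate-radius traps within distance $O(1)$ of a given unit cube is Poisson with bounded mean); then a Poisson tail bound $\bbP(\mathrm{Pois}(m)\ge t)\le e^{-t(\log(t/m)-1)}$ gives, for $t\asymp\log L$, a bound like $L^{-C}$ for $C$ as large as we please by choosing the threshold correctly, and a union bound over $O(L^{2d})$ cubes still leaves $\le 1/L$.

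The delicate point — and the part I expect to be the real obstacle — is that the radii are genuinely unbounded below only in the sense that $\nu$ has no atom forcing $r\ge 1$ uniformly... wait, in fact here $\nu([r,\infty])=r^{-\alpha}$ for $r\ge 1$, so all $r_i\ge 1$: every trap has radius at least $1$. That simplifies matters enormously: each trap contributes at most $1$ to $V^\go(x)$, so $V^\go(x)>\log L$ forces at least $\lceil\log L\rceil$ traps to cover $x$ simultaneously. Thus the only thing to prove is that no point in $[-L^2,L^2]^d$ is covered by $\ge\log L$ traps. The expected number of traps covering a fixed $x$ is $\int_1^\infty (\text{vol }B(0,r))\,\nu(\dd r)=c_d\int_1^\infty r^d\cdot\alpha r^{-\alpha-1}\dd r=c_d\alpha/(\alpha-d)$ when $\alpha>d$ (and is handled by the standing assumption $\alpha+\gga-d>0$ together with monotonicity in the potentially harder regime — I would just remark the relevant integral bounding the covering number is finite, being $\le$ a constant). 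Then: the set of $x$ covered by a given trap $B(\go_i,r_i)$ is that ball; I discretize $[-L^2,L^2]^d$ into a lattice of mesh $1$ (a trap of radius $\ge1$ covering some $x$ in a unit cell covers the whole neighbouring region within distance $1$, so covering numbers at lattice points control covering numbers everywhere up to an additive constant — alternatively enlarge the threshold), note the covering number of each lattice point is Poisson with mean $\le M:=c_d\alpha/(\alpha-d)$, apply $\bbP(\mathrm{Pois}(M)\ge\log L-O(1))\le \exp(-(\log L)(\log\log L - O(1)))=L^{-\log\log L+O(1)}$, and union-bound over the $O(L^{2d})$ lattice points to get a total $\le L^{2d}\cdot L^{-\log\log L+O(1)}\le 1/L$ for $L$ large. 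The only genuinely careful bookkeeping is making the discretization–to–continuum passage rigorous (handled by noting a ball of radius $\ge 1$ that meets a unit cube contains the cube's center up to shifting, or by working with radius $\ge 1/2$ balls and threshold $\log L - C$), and checking the Poisson mean integral is finite in the regime $d<\alpha$; when $\alpha\le d$ the same bound follows because then $\gga>d-\alpha\ge 0$ forces... here one would instead restrict to traps of radius $\le L$ (larger traps are few: expected number in the box with radius $\ge L$ is $L^{2d}L^{-\alpha}$, polynomially small after also controlling that each contributes $\le 1$ but there are at most polynomially many) — in short, I expect the write-up obstacle to be organizing these two radius regimes cleanly rather than any deep difficulty.
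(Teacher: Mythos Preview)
Your approach has a genuine gap in the regime $\alpha\le d$, which is allowed by the standing assumption $\alpha+\gga>d$. After you (correctly) notice that $r_i\ge 1$ and hence each trap contributes at most $1$, you reduce the problem to bounding the \emph{number} of traps covering a point. But when $\alpha\le d$ that number has infinite mean: $\bbE[\#\{i:|\go_i-x|\le r_i\}]=\sigma_d\alpha\int_1^\infty r^{d-\alpha-1}\dd r=\infty$, so in fact a.s.\ infinitely many traps cover every point. Your proposed fix (truncate to radii $r_i\le L$) does not help: the expected number of covering traps with radius in $[1,L]$ is then $\asymp L^{d-\alpha}\gg \log L$, and the Poisson tail bound $\bbP[\mathrm{Pois}(m)\ge t]$ is useless when $m\gg t$. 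Throwing away the weight $r_i^{-\gga}$ is exactly the step that breaks down: the finiteness of $V^\go(x)$ relies on the full condition $\alpha+\gga>d$, not on $\alpha>d$.

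The paper avoids this by keeping the weights. It dominates $\max_{x\in[-1,1]^d}V^\go(x)$ by the single Poisson functional
\[
X_1:=\sum_i r_i^{-\gga}\ind_{\{|\go_i|\le r_i+\sqrt d\}},
\]
computes its Laplace transform explicitly, checks the resulting integral converges precisely because $\alpha+\gga>d$, and then applies a Chernoff bound at level $(2d+2)$ together with a union bound over $O(L^{2d})$ translates. Your argument is fine when $\alpha>d$ and is in spirit the same discretize-and-union-bound strategy; to cover the general case you should replace the unweighted covering count by the weighted sum $X_1$ (or, equivalently, compute the exponential moment of $V^\go(0)$ directly rather than of the covering number).
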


 \begin{proof}
 
By translation invariance, it is sufficient to show that
\begin{equation}
 \bbP\left[ \max_{x\in [-1,1]^d} V^{\go}(x)\ge \log L \right]\le \frac{1}{L^{2d+1}}.
\end{equation}
 Note that 
\begin{equation}
 \max_{x\in [-1,1]^d} V^{\go}(x)\le  \max_{x\in B(0,\sqrt{d})} V^{\go}(x)\le \sum_{i=0}^N r_i^{-\gamma}\ind_{|\go_i|\le r_i+\sqrt{d}}:= X_1.
\end{equation}
Using standard properties of Poisson Point Processes one gets that
\begin{equation}
 \bbE\left[\exp(aX_1)\right]= \exp\left(\int_{1}^{\infty} \alpha^{-1}\sigma_d (r+\sqrt{d})^{d-\alpha-1}(e^{a r^{-\gamma}}-1) \dd r\right)<\infty, 
\end{equation}
where $\sigma_d$ is the volume of the unit d-dimensional euclidian ball.
Therefore one has
\begin{equation}
 \bbP\left[ \max_{x\in B(0,1)} V^{\go}(x)\ge \log L \right]\le \bbE\left[e^{(2d+2)X_1-(2d+2)\log L}\right].
\end{equation}
and the right-hand side is less than $L^{-2d+1}$ for $L$ large enough.
 \end{proof}

Let $p^*_t$ be the heat kernel on $[-L^{\xi}/2,L^{\xi}/2]^{d-1}$ with Dirichlet boundary condition
For $x$ in $[-L^{\xi}/2,L^{\xi}/2]$ set 
\begin{equation}
 A(x):=\prod_{i=1}^d\min\left( (x_i+L^{\xi}/2), (L^\xi/2-x_i)\right)
\end{equation}

\begin{lemma}\label{diric}
 One has that for every $t\ge 0$
\begin{equation}
p^*_t(x,y)\le A(x)A(y)/t^2
\end{equation}
and for all $t$ large enough (where large enough does not depend on $L$).
\begin{equation}
 p^*_t(x,y)\ge A(x)A(y)e^{-\frac{L^{2\xi}}{t}-\frac{t\pi^2}{2}}
\end{equation}
\end{lemma}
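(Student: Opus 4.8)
\textbf{Proof plan for Lemma \ref{diric}.}

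The plan is to estimate the Dirichlet heat kernel on the cube $[-L^\xi/2, L^\xi/2]^{d-1}$ by exploiting the product structure. Since the cube factorises as a product of $(d-1)$ intervals and the Dirichlet Laplacian respects this factorisation, the heat kernel factorises: $p^*_t(x,y) = \prod_{i=1}^{d-1} q_t(x_i, y_i)$, where $q_t$ is the Dirichlet heat kernel on the single interval $[-L^\xi/2, L^\xi/2]$. Hence it suffices to prove the corresponding one-dimensional bounds $q_t(u,v) \le c\, a(u) a(v)/t$ for all $t$ and $q_t(u,v) \ge c'\, a(u)a(v) e^{-CL^{2\xi}/t - c''\pi^2 t/L^{2\xi}}$ (suitably normalised in $L$) for $t$ large enough, where $a(u) := \min(u + L^\xi/2, L^\xi/2 - u)$ is the distance to the boundary; the $d-1$ factors then multiply to give the stated estimate, the exponents adding up and the power of $t$ becoming $t^{2}$ after accounting for the exact shape of the claim and the value of $d$. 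Actually one should be a bit careful: the paper's statement has $t^2$ and exponent $L^{2\xi}/t + t\pi^2/2$ regardless of $d$, so the one-dimensional estimates should be stated with enough room (e.g. upper bound $\le a(u)a(v)/t^{2/(d-1)}$ absorbing constants, and similar slack in the exponent) that the product closes; all such constants are harmless since ``large enough'' is allowed to depend on nothing but absolute quantities.

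For the one-dimensional kernel, I would use the eigenfunction expansion on the interval of length $\ell := L^\xi$. Rescaling to the unit interval $[0,1]$ via $u \mapsto (u + \ell/2)/\ell$, one has $q_t(u,v) = \ell^{-1} \sum_{n\ge 1} 2\sin(n\pi \tilde u)\sin(n\pi \tilde v) e^{-n^2\pi^2 t/\ell^2}$ with $\tilde u, \tilde v \in [0,1]$. For the \emph{upper bound}: bound each $|\sin(n\pi \tilde u)| \le n\pi \min(\tilde u, 1-\tilde u) \cdot$(const) — more precisely $|\sin(n\pi s)| \le n\pi s$ and $|\sin(n\pi s)| \le n \sin(\pi s) \le n\pi \min(s,1-s)\cdot\frac{\pi}{2}$ type inequalities — so that $|\sin(n\pi\tilde u)\sin(n\pi\tilde v)| \lesssim n^2 \tilde a(\tilde u)\tilde a(\tilde v)$ where $\tilde a$ is the rescaled boundary distance; then $\sum_n n^2 e^{-n^2 \pi^2 t/\ell^2} \lesssim (\ell^2/t)^{3/2}$ for small $t/\ell^2$ and is bounded by a constant times $e^{-\pi^2 t/(2\ell^2)}$ for large $t/\ell^2$, and in either regime one checks this is $\lesssim (\ell^2/t)^{2}$ times the right constants after undoing the rescaling; converting back to unnormalised variables ($\tilde a = a/\ell$, extra factor $\ell^{-1}$) gives $q_t(u,v) \le C a(u)a(v)/t^2$ in the form needed. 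For the \emph{lower bound} at large $t$: the $n=1$ term dominates, $q_t(u,v) \ge \ell^{-1}\big(2\sin(\pi\tilde u)\sin(\pi\tilde v)e^{-\pi^2 t/\ell^2} - \sum_{n\ge 2} 2 e^{-n^2\pi^2 t/\ell^2}\big)$; for $t \ge \ell^2$ (hence $t/\ell^2 \ge 1$) the tail is a fraction of the main term, and using $\sin(\pi s) \ge 2\min(s,1-s) = 2\tilde a(s)$ together with the elementary bound $e^{-\pi^2 t/\ell^2} \ge e^{-L^{2\xi}/t - \text{something}}$ — note $\pi^2 t/\ell^2$ must be compared with $L^{2\xi}/t + t\pi^2/2$; since the lemma only asserts the lower bound for $t$ large (not depending on $L$), and since $\ell = L^\xi$, one verifies $\pi^2 t / L^{2\xi} \le L^{2\xi}/t + t\pi^2/2$ holds comfortably in the relevant regime — yields the claim.

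The main obstacle is purely bookkeeping rather than conceptual: matching the crude product bounds coming from the one-dimensional spectral estimates to the precise form $t^{-2}$ and $e^{-L^{2\xi}/t - t\pi^2/2}$ written in the lemma, across the two regimes $t \lesssim L^{2\xi}$ and $t \gtrsim L^{2\xi}$, and checking that the range of $t$ for which the lower bound is used in \eqref{ikik} (namely $t \in [L^\xi/2, L^\xi]$, so $t \ll L^{2\xi}$) is actually compatible with a ``large $t$'' statement — I suspect the cleanest route is to prove a slightly stronger, cleaner two-sided Gaussian-type bound $q_t(u,v) \asymp \min(1, \tilde a(\tilde u)\tilde a(\tilde v)/t')\, (t')^{-1/2} e^{-d(u,v)^2/4t'}$ (with $t' = t/\ell^2$, valid uniformly) from standard references on Dirichlet heat kernels on intervals, and then simply weaken it to the form stated, absorbing all the slack. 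Once the one-dimensional two-sided estimate is in hand, taking the product over the $d-1$ coordinates and loosening constants gives both displayed inequalities.
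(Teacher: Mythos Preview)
Your reduction to one dimension via the product structure and your spectral expansion for the upper bound are exactly what the paper does (it bounds $|\sin(k\pi s)|\le k\pi\min(s,1-s)$ and sums $\sum k^2 e^{-(k\pi)^2 t/2}\le 8/(\pi^2 t^2)$ on the unit interval, then rescales). For the lower bound you also start the same way, isolating the $n=1$ mode.

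The substantive difference is in how the gap you flag is closed. You correctly observe that the spectral argument only yields the lower bound once $t/\ell^2$ is bounded below by an absolute constant, i.e.\ $t\gtrsim L^{2\xi}$, whereas the application in \eqref{ikik} uses $t\in[L^\xi/2,L^\xi]$, so $t/L^{2\xi}$ is tiny. Rather than importing a Davies-type two-sided Gaussian estimate from the literature, the paper gives a short self-contained argument: having proved the lower bound on the \emph{unit} interval for $t\ge t_0$ absolute, it projects $x,y$ onto points $x^*,y^*$ lying at distance $\ge 1/2$ from the boundary of $[-L^\xi/2,L^\xi/2]$, and lower-bounds $p^*_t(x,y)$ by the probability that Brownian motion stays in the width-$1$ moving tube around the segment $[x^*,y^*]$ while going from $x$ to $y$. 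A Girsanov change of drift then turns this into the unit-interval kernel $p^{0,*}_t(x-x^*+\tfrac12,\,y-y^*+\tfrac12)$ times an exponential factor controlled by $|x^*-y^*|^2/t\le L^{2\xi}/t$. The point is that the time variable in the unit-interval kernel is now $t$ itself (not $t/L^{2\xi}$), so the spectral lower bound applies for $t\ge t_0$ independent of $L$; and the Girsanov cost is precisely the $e^{-L^{2\xi}/t}$ appearing in the lemma.

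Your plan via a black-box two-sided heat-kernel estimate on the interval would certainly also work and is arguably cleaner; the paper's route trades a citation for three lines of Girsanov.
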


\begin{proof}
First we remark that due to the product structure of the Kernel, it is sufficient to treat the one dimensional case $(d-1)=1$.
One considers first  $p_t^{0,*}$ the heat kernel on $[0,1]$ with Dirichlet boundary condition.
Diffusive scaling gives
\begin{equation}
 p^*_t(x,y)=L^{-\xi} p^*_{L^{-2\xi}t}(L^{-\xi}x+1/2,L^{-\xi}y+1/2).
\end{equation}

 A decomposition of the Dirac distribution $\gd_x$ onto the base of eigenfunction of $\gD/2$ with Dirichlet boundary condition gives 
\begin{multline}
 p_t^{0,*}(x,y)=\sum_{k=1}^{\infty} 2\sin (k\pi x) \sin (k\pi y) e^{-(k\pi)^2 t}\\
\le \min(x,1-x)\min(y,1-y)\sum_{k=1}^{\infty} 2\pi^2 k^2 e^{-\frac{(k\pi)^2t}{2}}\\
\le \frac{8}{\pi^2 t^2} \min(x,1-x)\min(y,1-y).
\end{multline}
which once rescaled, gives the desired upper-bound.
Now we perform a lower bound on $p_t^{0,*}$ for large $t$, indeed using similar computation one gets that there exist a constant $C$ such that of all $t\ge 1$
\begin{equation}
 |\sum_{k=2}^{\infty} 2\sin (k\pi x) \sin (k\pi y) e^{-\frac{(k\pi)^2 t}{2}}|
\le C \min(x,1-x)\min(y,1-y) e^{-2\pi^2 t}.
\end{equation}
Hence for $t$ large enough.
\begin{multline}\label{pstar0}
 p_t^{0,*}(x,y)\ge 2\sin (\pi x) \sin (\pi y) e^{-\frac{\pi^2 t}{2}}-|\sum_{k=2}^{\infty} 2\sin (k\pi x) \sin (k\pi y) e^{-\frac{(k\pi)^2t}{2} }|\\
\ge  
\min(x,1-x)\min(y,1-y)e^{-\frac{\pi^2 t}{2}}.
\end{multline}
However this is not sufficient to get directly by rescaling the lower bound for $ p^*_t(x,y)$ for $t\le L^{2\xi}$.

To do so define $x^*$ (and $y^*$ is defined similarly) as
\begin{equation} \begin{split}
 x^*&:= x \quad \text{ if } x\in[-(L^{\xi}-1)/2/2,(L^{\xi}-1)/2],\\
 x^*&:= (L^{\xi}-1)/2 \quad \text{ if } x\ge (L^{\xi}-1)/2,\\
 x^*&:= -(L^{\xi}-1)/2 \quad \text{ if } x\le -(L^{\xi}-1)/2,
\end{split}\end{equation}

One uses the following comparison argument 
\begin{multline}
 p_t^{*}(x,y)\dd y=\bP_x\left[B_s\in [-L^{\xi}/2,L^{\xi/2}];\ \forall s\in[0,t], B_t\in \dd y\right]\\
\ge \bP_x\left[ |B_s-sy^*+(1-s)x^*|\le 1/2;\ \forall s\in [0,1], B_t\in \dd y\right]\\
 = e^{\frac{1}{t}\left(-(y-x)(x^*-y^*)+\frac{(x^*-y^*)^2}{2}\right)}  p^{0,*}_t(x-x^*+1/2,y-y*+1/2)\dd y,
\end{multline}
where last inequality is just Girsanov Path Transform.
Then we use \eqref{pstar0} to get the result.

\end{proof}

\section{The point to point model}

\subsection{Result}
In this Section we show that a weakened version of our result holds for the so-called point-to-point model.
Consider $\go$ and $V^\go$ defined as for the other model.
Given $y\in \bbR^d$ we define $B(y):=B(y,1)$ to be the euclidian ball of radius one centered on $y$ and $T_y$ to be the first hitting time of $  B(y)$.
For $L>0$ set $y_L:=(L,0,\dots,0)$ and

\begin{equation}
Z^\go_L:= \bE \left[e^{-\int_0^{T_{y_L}} (\gl+V^\go(B_t)) \dd t }\ind_{\{T_{y_L}<\infty\}}\right].
\end{equation}
The associated path measure $\mu_{L}^{\go}$ is given by

\begin{equation}
\frac{\dd \mu_L^{\go}}{\dd P}:=\frac{1}{Z^\go_L}e^{-\int_0^{T_{y_L}} (\gl+V(B_t)) \dd t }\ind_{T_{y_L}<\infty}.
\end{equation}
Consider $\mathcal C_L^{\xi}$ as in \eqref{clxi} and define

\begin{equation}
\mathcal A_L^{\xi}:= \{ T_{y_L}<\infty, \forall  t \in [0, T_y], B_t \in \mathcal C_L^{\xi}\}.
\end{equation}

One has the following weakened version of Theorem \ref{superdiff}

\begin{theorem}
For any
\begin{equation}
\xi<\frac{1}{1+\alpha+2\gamma-d}
\end{equation}
one has
\begin{equation}
\lim_{L\to\infty}\bbE\left[ \mu_L^{\go}(\mathcal A_L^\xi)\right]=0.
\end{equation}
\end{theorem}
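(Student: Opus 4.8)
The plan is to mimic the proof of Theorem \ref{superdiff}, but replacing the rotational-invariance machinery of Section \ref{premsr} by a simpler construction that works in the point-to-point setup, namely the one suggested in the paper's introduction ``inspired by what is done in \cite{cf:W3}''. The starting point is the following observation: a trajectory in $\mathcal A_L^\xi$ can, at entropic cost at most $\exp(C L^{2\xi-1}\log L)$, be replaced by a trajectory that avoids a sub-tube of $\mathcal C_L^\xi$ of width $\approx L^\xi$ running along a middle portion of the axis. Concretely, I would fix the sub-box $\bar C_L^\xi$ as in \eqref{bariri} and its fattening $\tilde C_L^\xi$ as in \eqref{tildidi}, let $\mathcal B_L^\xi$ be the event that $B$ stays in $\mathcal C_L^\xi$, hits $B(y_L)$, and avoids $\tilde C_L^\xi$, and prove the deterministic-up-to-a-good-event comparison
\begin{equation}
Z_L^\go(\mathcal A_L^\xi)\le e^{C L^{2\xi-1}(\log L)^3}\, Z_L^\go(\mathcal B_L^\xi),
\end{equation}
valid on the event $\{\max_{|x|\le L^2}V^\go(x)\le\log L\}$, which has probability $\ge 1-1/L$ by Lemma \ref{troto}. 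This replaces Proposition \ref{preswe}. Unlike the point-to-plane case one cannot merely rotate; instead one shifts the transversal coordinate by $\pm$ (a few) $L^\xi$ on the middle third $[L/3,2L/3]$ of the first coordinate, reconnecting the pieces by short Brownian bridges. The cost of a reconnection of transversal displacement $O(L^\xi)$ over a first-coordinate-length $O(L)$ is $\exp(O(L^{2\xi-1}))$ per reconnection, and the confinement/avoidance constraints are respected for $L$ large; summing over the bounded number of reconnections gives the stated exponent. This step — making the ``push the polymer out of $\tilde C_L^\xi$'' argument rigorous while controlling the potential via $V^\go\le\log L$ in the ball of radius $L^2$ — is where the technical care is needed, but it is essentially the $W^{2,3}$-style argument and goes through.

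Next, I would import Sections \ref{addt} and \ref{tunapa} verbatim. The change of environment $\go\to\tilde\go=\go+\hat\go$ with $\hat\go$ a Poisson process of intensity $L^{-((d+1+\alpha)\xi+1)/2}$ on $\bar C_L^\xi\times[\sqrt d L^\xi,2\sqrt d L^\xi]$ is unchanged, and Lemma \ref{mezchange} (under $(d-1-\alpha)\xi+1>0$) gives $\bbP(\go\in A)\le C\sqrt{\bbP(\tilde\go\in A)}$ with no modification, since it only concerns the laws $Q,\tilde Q$ of the environments. Likewise Proposition \ref{tecnicos} is a statement about the confined Brownian motion between two transversal hyperplanes with a hitting constraint, and the only change needed is to replace $T_{\mathcal H_L}$ by $T_{y_L}$ throughout; the heat-kernel estimates of Lemma \ref{diric} and the passage-time asymptotics are identical. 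Hence Proposition \ref{plez} holds as stated: for any $\gep>0$, with probability tending to one,
\begin{equation}
\log Z_L^\go(\mathcal A_L^\xi)-\log Z_L^{\tilde\go}(\mathcal A_L^\xi)\ge L^{\frac{(d+1-\alpha-2\gamma)\xi+1}{2}-\gep},
\end{equation}
using the key point that $Z_L^\go(\mathcal B_L^\xi)=Z_L^{\tilde\go}(\mathcal B_L^\xi)$ because $V^\go$ and $V^{\tilde\go}$ differ only inside $\tilde C_L^\xi$, which $\mathcal B_L^\xi$-trajectories avoid.

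Finally I would assemble these exactly as in Subsection ``Proof of Theorem \ref{superdiff}''. Take $\xi<\frac{1}{1+\alpha+2\gamma-d}$; this is precisely the condition making
\begin{equation}
\frac{(d+1-\alpha-2\gamma)\xi+1}{2}>2\xi-1,
\end{equation}
and it also forces $\xi<1$ so that $(d-1-\alpha)\xi+1>0$ holds (here $\alpha+2\gamma-d<1-2\gamma\le 1$ keeps the relevant quantities positive — one checks the two inequalities \eqref{condit} directly as in the main proof). Combining the comparison with $\mathcal B_L^\xi$, the identity $Z_L^\go(\mathcal B_L^\xi)=Z_L^{\tilde\go}(\mathcal B_L^\xi)$, and Proposition \ref{plez}, one gets $\mu_L^{\tilde\go}(\mathcal A_L^\xi)\le \exp(-\tfrac12 L^{\frac{(d+1-\alpha-2\gamma)\xi+1}{2}-\gep})\to 0$ with probability tending to one, and then Lemma \ref{mezchange} applied to the event $\{\mu_L^\go(\mathcal A_L^\xi)>\exp(-\tfrac12 L^{\frac{(d+1-\alpha-2\gamma)\xi+1}{2}-\gep})\}$ transfers this back to $\go$, giving $\bbE[\mu_L^\go(\mathcal A_L^\xi)]\to 0$. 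The main obstacle is the first step — establishing the deterministic tube-avoidance comparison without rotational invariance — and the reason this yields only the weaker exponent $\frac{1}{1+\alpha+2\gamma-d}$ (rather than $\bar\xi$, which in the regime $\gamma\le\alpha-d$ is larger) is that the crude shift construction does not exploit the optimal geometry that the rotation argument of Section \ref{premsr} exploits in the point-to-plane case.
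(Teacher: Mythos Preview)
Your first step is where the argument breaks, and in two separate ways.

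First, the ``deterministic-up-to-a-good-event'' comparison
\[
Z_L^\go(\mathcal A_L^\xi)\le e^{C L^{2\xi-1}(\log L)^3}\, Z_L^\go(\mathcal B_L^\xi)
\]
cannot hold under the sole assumption $V^\go\le\log L$. A path-shifting map $B\mapsto B'$ that pushes the middle portion out of $\tilde C_L^\xi$ sends the integrated potential $\int_0^{T_{y_L}}V^\go(B_t)\,\dd t$ to $\int_0^{T_{y_L}}V^\go(B'_t)\,\dd t$, and these differ by the integral of $V^\go$ over a region of time-length of order $L$; the bound $V^\go\le\log L$ controls neither the sign nor the size of this difference, which can be of order $L$. (Concretely: the environment could simply be much more favourable inside $\tilde C_L^\xi$ than outside.) So a deterministic comparison with polynomial error is impossible; some form of distributional symmetry is essential. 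The paper's appendix uses exactly this: it translates the environment by $\rho^{-i}$, $\rho$ the shift by $(1+2\sqrt d)L^\xi e_2$, observes that $Z_L^{\rho^{-i}\go}(\mathcal A_L^\xi)$ are identically distributed, and applies the pigeonhole Lemma~\ref{rotatruc}-style to get, with probability $\ge 1-1/\log L$, some $i\ne 0$ with $Z_L^\go(\mathcal A_L^\xi)\le Z_L^{\rho^{-i}\go}(\mathcal A_L^\xi)$. One then connects $0\to x_{i,L}$ and $y_{i,L}\to y_L$ through narrow tubes of length $O(iL^\xi)$ using tubular estimates and the crude bound $V\le\log L$, giving a cost $e^{-C(\log L)^2 L^\xi}$, hence the comparison
\[
\log Z_L^\go(\mathcal A_L^\xi)\le \log Z_L^\go(\mathcal B_L^\xi)+C'(\log L)^2 L^\xi
\]
with high probability (Proposition~\ref{arabiata}).

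Second, your arithmetic is inconsistent with your own claim. The inequality $\tfrac{(d+1-\alpha-2\gamma)\xi+1}{2}>2\xi-1$ is equivalent to $\xi<\tfrac{3}{3+\alpha+2\gamma-d}$, not to $\xi<\tfrac{1}{1+\alpha+2\gamma-d}$. The latter is equivalent to $\tfrac{(d+1-\alpha-2\gamma)\xi+1}{2}>\xi$, which is exactly what one needs when the comparison error is $L^\xi$ rather than $L^{2\xi-1}$. In other words, if your first step really produced cost $L^{2\xi-1}$, you would prove the full point-to-plane bound $\bar\xi$ for point-to-point, contradicting the paper's explicit statement that this is out of reach. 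The reason the cost is stuck at $L^\xi$ is intrinsic: after any rotation or translation moving the tube off $\tilde C_L^\xi$, the (image of the) endpoint is at distance of order $L^\xi$ from $y_L$, and the Brownian cost of closing that gap is already $e^{-cL^\xi}$; the $O(L\theta^2)=O(L^{2\xi-1})$ miracle of Section~\ref{premsr} relies on the target being a hyperplane, which absorbs the first-order displacement. (Incidentally, your event $\mathcal B_L^\xi$ as written---stay in $\mathcal C_L^\xi$ \emph{and} avoid $\tilde C_L^\xi$---is empty, since $\tilde C_L^\xi$ contains a full cross-section of $\mathcal C_L^\xi$.)

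The remainder of your outline (Sections~\ref{addt} and~\ref{tunapa} carry over, the identity $Z_L^\go(\mathcal B_L^\xi)=Z_L^{\tilde\go}(\mathcal B_L^\xi)$, and the transfer via Lemma~\ref{mezchange}) is correct and matches the paper.
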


\subsection{Sketch of proof}
One defines (in analogy with \eqref{bariri}, \eqref{tildidi})
\begin{equation}\label{barccc}
\bar C_L^\xi := [L/4,3L/4] \times [-L^{\xi}/2,L^{\xi}/2]^{d-1}
\end{equation}
and 
\begin{equation}
\tilde C_L^\xi:= \{ x\in \bbR^d \ | \ d(x, \bar C_L)>2 \sqrt{d} L^{\xi} \}=  \bigcup_{y\in \bar C_L} B(y,2 \sqrt{d} L^{\xi}),
\end{equation}

Let $\mathcal B^{\xi}_L$ be the set of trajectories that avoids the set $\tilde C_L^\xi$.
\begin{equation}
\mathcal B^{\xi}_L:=  \{ B \ | \ \forall t \in [0, T_{y_L}], B_t\notin \tilde C_L^\xi \}.
\end{equation}

The strategy we use here, is to get first a weak comparison between $Z_L^\go (\mathcal B^{\xi}_L)$ and $Z_L^\go (\mathcal A^{\xi}_L)$ similar to the one of Proposition
\ref{preswe}, 
and then to upgrade it by modifying the environment, adding traps of radius $(\sqrt{d}L^\xi,2\sqrt{d}L^\xi)$ in $\bar C_L^\xi$.

Fix $\xi<\frac{1}{1+\alpha+2\gamma-d}$.
We assume that is bounded by $\log L$ on $B(0,L^2)$
(that happens with probability larger than $1-1/L$ according to Lemma \ref{troto}).

\medskip
Let $e_1=(1,0, \dots,0)$ and $e_2=(0,1,0 \dots,0)$ be the two first coordinate vector in $\bbR^d$'s canonical base.
For $i\in \bbN$ one defines
\begin{equation}
  \mathcal C_{L,i}^{\xi}+\mathcal C_L^{\xi}+i(1+2\sqrt{d})L^\xi e_2.
 \end{equation}
 And set 
 \begin{equation}\begin{split}
 x_{i,L}&:=i(1+2\sqrt{d})L^\xi e_2,\\
 y_{i,L}&:=Le_1 +i(1+2\sqrt{d})L^\xi e_2,\\
 \mathcal D_{i,L}^{(1)}&:= \bigcup_{\alpha\in [0,1]} B(\alpha x_{i,L},2),\\
 \mathcal D_{i,L}^{(2)}&:=  \mathcal D_{i,L}^{(1)}+Le_1.
 \end{split}\end{equation}
 The set  $\mathcal D_i^{(1)}$ resp. $\mathcal D_i^{(2)}$ is the set of points whose distance to the segment $[0, x_{i,L}]$ resp. $[y_L, y_{i,L}]$ less than two. 
 
  Finally $\rho$ denote the translation of vector $(1+2\sqrt{d})L^\xi e_2$.
  
  \medskip

  Consider the family of events 
\begin{multline}
\mathcal A_{L,i}^{\xi}:=\{T_{x_{i,L}}<T_{y_{i,L}}<T_{y_L}<\infty, \forall t\in (T_{x_{i,L}},T_{y_{i,L}}), B_t \in  \mathcal C_L^{\xi},\\
 \forall t<T_{x_{i,L}}, B_t\in  \mathcal D_{i,L}^{(1)},  \forall t\in (T_{y_{i,L}}, T_{y_{L}}), B_t\in  \mathcal D_{i,L}^{(2)},\}.
\end{multline}

Note that  these events are disjoint (see figure \ref{yty3}), and that for all $i\ne 0$, $\mathcal A_{L,i}^{\xi}\subset \mathcal B_L^\xi$, and hence
\begin{equation}
Z^\go_L(\mathcal B_L^\xi) \ge \sum_{i\in\{-\log L,\dots \log L\}\setminus \{0\}} Z^\go_L(A_{L,i}^\xi).
\end{equation}

\begin{figure}[hlt]
\begin{center}
\leavevmode 
\epsfxsize =14 cm
\psfragscanon
\psfrag{L/4}{$L/4$}
\psfrag{3L/4}{$3L/4$}
\psfrag{L}{$L$}
\psfrag{Rd}{$\R^{d-1}$}
\psfrag{R}{$\R$}
\psfrag{CLXI}{$\tilde C_L^\xi$}
\psfrag{O}{$0$}
\psfrag{D1}{$\mathcal D^{(1)}_{1,L}$}
\psfrag{D2}{$\mathcal D^{(2)}_{1,L}$}
\psfrag{yl1}{$y_{1,L}$}
\psfrag{yl}{$y_{L}$}
\psfrag{xl1}{$x_{1,L}$}
\epsfbox{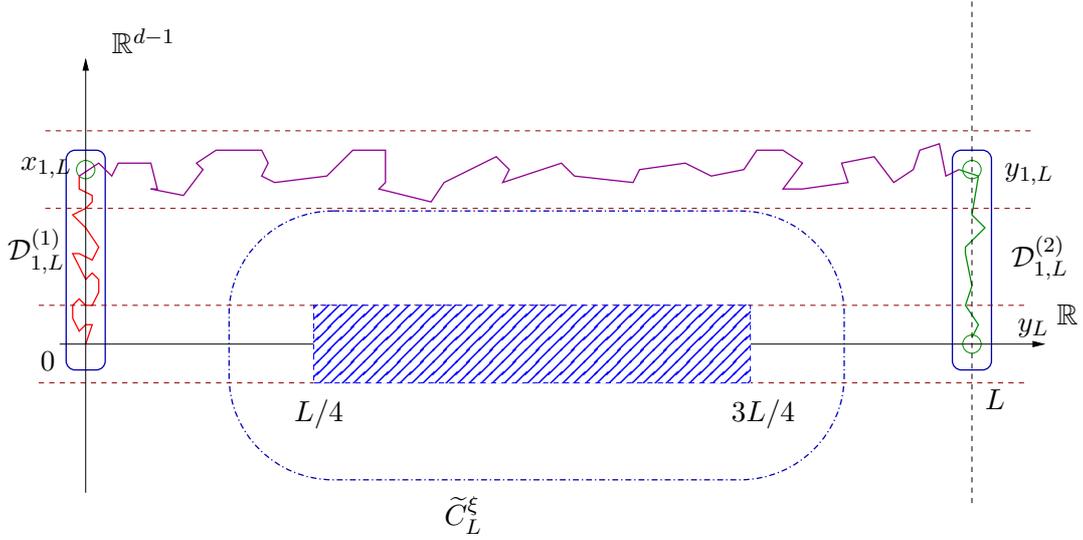}
\end{center}
\caption{\label{yty3}  Here is a two-dimensional projection of a trajectory in $\mathcal A_{L,1}^\xi$. First it reaches $B(x_{L,1})$ while staying in the 
narrow tube $\mathcal D^{(1)}$ then it goes from $B(x_{L,1})$ to  $B(y_{L,1})$ and stays in the tube $\mathcal C^\xi_{L,1}$ (dashed line), and then 
stays in $\mathcal D^{(1)}$ till it hits $B(y)$. It appears clearly on the figure that such trajectories cannot hit $\tilde C_L^\xi$ whose limits are represented 
by the thick dashed line (the shadowed region is  $C_L^\xi$). The three parts of the path corresponding to three terms in  the decomposition  \eqref{dsk} are draw in different colors}
\end{figure}
 Then by using the Markov property at time $T_{x_{i,L}}$ and $T_{y_{i,L}}$, one gets that 
 
\begin{multline}\label{dsk}
 Z^\go_L(A_{L,i}^\xi)
\ge 
\bE\left[ e^{-\int_0^{T_{x_{L,i}}} (\gl+V^\go(B_t)) \dd t } \ind_{\{T_{x_{i,L}}<\infty;  \forall t<T_{x_{i,L}}, B_t\in  \mathcal D_{i,L}^{(1)}\}}\right]\\
\times 
\inf_{u\in B(x_{L_i})}\bE_{u}\left[e^{-\int_0^{T_{y_{L,i}}} (\gl+V^\go(B_t)) \dd t }\ind_{\{T_{y_{i,L}}<\infty; \forall t\le T_{y_{i,L}}, B_t \in \mathcal C_{L,i}^{\xi}\}} \right]\\
\times \inf_{v \in B(y_{L_i})}\bE_{v}\left[e^{-\int_0^{T_{y_{L}}} (\gl+V^\go(B_t)) \dd t }\ind_{\{T_{y_{L}}<\infty; \forall t\le T_{y_{L}},  B_t\in  \mathcal D_{i,L}^{(2)}\}} \right].
\end{multline}

Then one remarks that the first and third term can be bounded by using standard tubular estimate for Brownian Motion (see for instance (1.11) of \cite{cf:SCPAM}).
Indeed in $ D_{i,L}^{(j)}$ under our assumption the potential is less than $\log L$, there exists  $C$ such that both terms are larger than $\exp(- Ci L^\xi\log L)$ for  and $L$ large enough.

As for the second term, using Proposition 2.2 in \cite[Chapter 5]{cf:S}, one get that it is larger than
\begin{equation}
e^{-C\log L} \bE_{x_{L_i}}\left[e^{-\int_0^{T_{y_{L,i}}} (\gl+V^\go(B_t)) \dd t }\ind_{\{T_{y_{i,L}}<\infty; \forall t\le T_{y_{i,L}}, B_t \in \mathcal C_{L,i}^{\xi}\}} \right]
=e^{-C\log L}  Z_L^{\rho^{-i} \go}(\mathcal A_L^\xi)
\end{equation}

We write the conclusion of this as

\begin{lemma}
With probability larger than $1-1/L$. 
\begin{equation}
\log Z^\go_L(\mathcal B^\xi_L)\ge  \max_{i\in \{-\log L,\dots \log L\}\setminus \{0\} } \log Z_L^{\rho^{-i} \go}(\mathcal A_L^\xi)-C' (\log L^2)L^\xi.
\end{equation}
\end{lemma}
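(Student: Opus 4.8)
The plan is to establish the lemma by combining three ingredients already assembled in this section: the disjointness of the events $\mathcal A_{L,i}^\xi$ (so that $Z^\go_L(\mathcal B^\xi_L)$ dominates their sum), the Markov-property decomposition \eqref{dsk} into three legs, and the stated lower bounds for each leg. First I would fix a realization of $\go$ in the full-probability event $\{V^\go \le \log L \text{ on } B(0,L^2)\}$, whose probability is at least $1-1/L$ by Lemma \ref{troto}; all subsequent estimates are deterministic on this event. Note that for $|i| \le \log L$ the relevant sets $\mathcal D^{(1)}_{i,L}$, $\mathcal D^{(2)}_{i,L}$, $\mathcal C^\xi_{L,i}$ all lie inside $B(0,L^2)$ for $L$ large, so the hypothesis $V^\go \le \log L$ applies throughout.

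Next I would make the three leg-bounds quantitative. For the first and third legs, the Brownian motion must travel a distance of order $i L^\xi$ along a tube of width $O(1)$ (the tube $\mathcal D^{(j)}_{i,L}$ of radius $2$); the standard tubular estimate for Brownian motion (e.g.\ (1.11) of \cite{cf:SCPAM}) gives a probabilistic cost $\exp(-C i L^\xi \log L)$ once one also pays the potential term, which is at most $\log L$ times the travel time, itself of order $i L^\xi$ — this is absorbed into the same exponential by enlarging $C$. For the middle leg, Proposition 2.2 in \cite[Chapter 5]{cf:S} lets me replace the infimum over the starting ball $B(x_{L,i})$ by the single point $x_{L,i}$ at the cost of a factor $e^{-C\log L}$, and then translation invariance identifies the resulting quantity with $Z_L^{\rho^{-i}\go}(\mathcal A_L^\xi)$. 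Multiplying the three legs and taking logarithms turns the product \eqref{dsk} into a sum; since $i \le \log L$, the term $C i L^\xi \log L$ is at most $C(\log L)^2 L^\xi$, and together with the $C\log L$ from the middle leg one obtains, for a suitable constant $C'$,
\begin{equation}
\log Z^\go_L(\mathcal A_{L,i}^\xi) \ge \log Z_L^{\rho^{-i}\go}(\mathcal A_L^\xi) - C'(\log L)^2 L^\xi
\end{equation}
uniformly over $i \in \{-\log L,\dots,\log L\}\setminus\{0\}$.

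Finally, using $Z^\go_L(\mathcal B^\xi_L) \ge \sum_{i\ne 0} Z^\go_L(\mathcal A_{L,i}^\xi) \ge \max_{i\ne 0} Z^\go_L(\mathcal A_{L,i}^\xi)$ and taking the maximum over $i$ in the displayed inequality yields the claim. I expect the main obstacle to be the bookkeeping in the first and third legs: one must check carefully that the tubular estimate applies with the correct geometry (the segments $[0,x_{i,L}]$ and $[y_L,y_{i,L}]$ are transversal segments of length $i(1+2\sqrt d)L^\xi$, not the ballistic direction), that the narrow tubes $\mathcal D^{(j)}_{i,L}$ do not meet $\tilde C_L^\xi$ so the constraint in $\mathcal B^\xi_L$ is respected, and that the potential contribution along these legs is genuinely $O(iL^\xi \log L)$ rather than something larger. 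Everything else — disjointness, the Markov property, translation invariance — is routine once the geometric picture of Figure \ref{yty3} is fixed.
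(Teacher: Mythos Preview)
Your proposal is correct and follows essentially the same argument as the paper: the lemma is stated in the paper as the conclusion of the preceding discussion, which proceeds exactly as you outline --- restrict to the event $\{V^\go\le\log L\text{ on }B(0,L^2)\}$ via Lemma \ref{troto}, use $Z^\go_L(\mathcal B^\xi_L)\ge\max_{i\ne 0}Z^\go_L(\mathcal A^\xi_{L,i})$, apply the three-leg Markov decomposition \eqref{dsk}, bound the first and third legs by tubular estimates from \cite{cf:SCPAM} giving $\exp(-CiL^\xi\log L)$, and handle the middle leg via Proposition~2.2 of \cite[Chapter 5]{cf:S} plus translation invariance. Your additional remark that $i\le\log L$ converts $CiL^\xi\log L$ into $C(\log L)^2 L^\xi$ makes explicit the final constant, which the paper leaves implicit.
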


The above Lemma plays the role of Lemma \ref{milor} for the point to plane model.
The reason why the result we obtain at the end is not as good as for the point to plane model is that the $L^{2\xi-1}$ of Lemma \ref{milor}
is replaced by $L^\xi$ here.

One can also get an equivalent of Lemma \ref{rotatruc} (the proof being exactly analogous)

\begin{lemma}
\begin{equation}
\bbP\left[\log Z_L^{\go}(\mathcal A_L^\xi)> \max_{i\in \{-\log L,\dots, \log L\}} \log Z_L^{\rho^{-i} \go}(\mathcal A_L^\xi) \right]\le \frac{1}{\log L}
\end{equation}
\end{lemma}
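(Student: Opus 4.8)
The plan is to repeat the proof of Lemma \ref{rotatruc} almost verbatim, with the translation $\rho$ playing the role of the rotations $R_{i\theta}$ and the pigeonhole trick of \cite{cf:P}; as in Lemma \ref{rotatruc}, the maximum is to be read over $i\neq0$ (for $i=0$ the right-hand side equals $\log Z_L^{\go}(\mathcal A_L^\xi)$ and the comparison is vacuous). Put $N:=\lfloor\log L\rfloor$ and, for $i\in\Z$, set $X_i:=Z_L^{\rho^{-i}\go}(\mathcal A_L^\xi)$. Since the intensity $\mathcal L\times\nu$ of $\go$ is invariant under the translation $\rho$, one has $\rho^{-k}\go\stackrel{d}{=}\go$ for every $k$, and because $X_i(\rho^{-k}\go)=Z_L^{\rho^{-(i+k)}\go}(\mathcal A_L^\xi)=X_{i+k}(\go)$, the whole sequence $(X_i)_{i\in\Z}$ is stationary under index shifts. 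This stationarity is the analogue of the ``rotational invariance of $\go$ and $V^\go$'' used in Lemma \ref{rotatruc}; concretely it comes from translating the Brownian path, which identifies $Z_L^{\rho^{-i}\go}(\mathcal A_L^\xi)$ with the point-to-point partition function through the translated tube $\mathcal C_{L,i}^\xi$ from $x_{i,L}$ to $y_{i,L}$ (with the original potential $V^\go$). In particular the variables $X_i$, $i\in\{-N,\dots,N\}$, are identically distributed; they are however \emph{not} exchangeable, so the statement needs an argument.

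The core is a disjointness step. For $k\in\{0,1,\dots,N\}$ let $G_k$ be the event that $X_k$ is the unique strict maximum of $X_0,\dots,X_N$. The $N+1$ events $G_0,\dots,G_N$ are pairwise disjoint, hence $\sum_{k=0}^N\bbP(G_k)\le1$ and there exists $k_0\in\{0,\dots,N\}$ with $\bbP(G_{k_0})\le1/(N+1)$. By stationarity of $(X_i)_{i\in\Z}$, shifting the window $\{0,\dots,N\}$ by $-k_0$, the probability $\bbP(G_{k_0})$ equals the probability that $X_0$ is the strict maximum of $(X_j)_{j\in\{-k_0,\dots,N-k_0\}}$. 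Since $\{-k_0,\dots,N-k_0\}\subseteq\{-N,\dots,N\}$, the event that $X_0$ strictly dominates every $X_i$ with $i\in\{-N,\dots,N\}\setminus\{0\}$ is contained in this one, so it too has probability at most $1/(N+1)\le1/\log L$, which is the asserted bound. Alternatively, as in Lemma \ref{rotatruc}, one could exploit the reflection $x\mapsto(x_1,-x_2,x_3,\dots,x_d)$, under which $\go$, $V^\go$ and $\mathcal A_L^\xi$ are all invariant, to keep the competing window centred at $0$; but the sub-window observation above already suffices.

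I expect the only genuine subtlety to be conceptual rather than computational: identical distribution is strictly weaker than exchangeability, so a naive union bound over the $2\log L$ competing partition functions gives nothing useful, and the argument must go through the disjointness of the strict-maximum events together with a single well-chosen translation. The remaining ingredient, stationarity of $(X_i)_{i\in\Z}$, is a routine translation change of variables for Brownian motion and uses no regularity assumption on the environment, so the estimate holds for all $L$ and all $\xi\in(0,1)$.
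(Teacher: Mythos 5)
Your proof is correct and follows essentially the same route as the paper: the paper simply says the proof is "exactly analogous" to Lemma \ref{rotatruc}, whose argument is precisely the disjointness-of-strict-maximum-events pigeonhole over a window of $N$ indices followed by a shift using stationarity (there rotational, here translational), which is exactly what you reproduce. Your reading of the max as ranging over $i\neq 0$ matches the paper's intent, and the remark about reflection symmetry is an optional extra not used by the paper.
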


and thus of Proposition \ref{preswe}

\begin{proposition} \label{arabiata}
\begin{equation}
\bbP\left[\log Z_L^{\go}(\mathcal A_L^\xi)> \log Z^\go_L(\mathcal B^\xi_L)+C' (\log L^2)L^\xi \right]\le \frac{2}{\log L}.
\end{equation}
\end{proposition}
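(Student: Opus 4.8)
The plan is to combine the two lemmas that were just stated — the tubular-estimate lemma comparing $\log Z^\go_L(\cB^\xi_L)$ with $\max_i \log Z_L^{\rho^{-i}\go}(\mathcal A_L^\xi)$, and the exchangeability lemma bounding the probability that $\log Z_L^\go(\mathcal A_L^\xi)$ is the largest among the translates — in exactly the same way the proof of Proposition \ref{preswe} combines Lemmas \ref{milor} and \ref{rotatruc}. First I would invoke Lemma \ref{troto} (with probability at least $1-1/L$ the potential $V^\go$ is bounded by $\log L$ on $B(0,L^2)$), so that the deterministic tubular lemma applies on that event; call this good event $\mathcal G_L$. On $\mathcal G_L$ one has
\begin{equation}
\log Z^\go_L(\cB^\xi_L)\ge \max_{i\in\{-\log L,\dots,\log L\}\setminus\{0\}} \log Z_L^{\rho^{-i}\go}(\mathcal A_L^\xi)-C'(\log L)^2 L^\xi.
\end{equation}

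Next I would apply the second lemma: with probability at least $1-1/\log L$ the variable $\log Z_L^\go(\mathcal A_L^\xi)$ is \emph{not} strictly larger than $\max_{i} \log Z_L^{\rho^{-i}\go}(\mathcal A_L^\xi)$ (the max over $i\in\{-\log L,\dots,\log L\}$, including $i=0$; note that removing $i=0$ from the max only makes the event smaller, and in any case if $\log Z_L^\go(\mathcal A_L^\xi)$ equals the max it is bounded by the max over $i\neq 0$, so on this event $\log Z_L^\go(\mathcal A_L^\xi)\le \max_{i\neq 0}\log Z_L^{\rho^{-i}\go}(\mathcal A_L^\xi)$). Intersecting this event with $\mathcal G_L$, which has probability at least $1-1/L-1/\log L\ge 1-2/\log L$ for $L$ large, and chaining the two inequalities gives
\begin{equation}
\log Z_L^\go(\mathcal A_L^\xi)\le \max_{i\neq 0}\log Z_L^{\rho^{-i}\go}(\mathcal A_L^\xi)\le \log Z^\go_L(\cB^\xi_L)+C'(\log L)^2 L^\xi,
\end{equation}
which is precisely the complement of the event in the statement, so that event has probability at most $2/\log L$. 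Writing $\log L^2$ for $(\log L)^2$ as in the lemma statements, this is exactly Proposition \ref{arabiata}.

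The only genuine subtlety — and the step I would be most careful about — is the bookkeeping on which index set the maxima range over and whether the index $0$ is included: the tubular lemma excludes $i=0$ (since $\mathcal A_{L,0}^\xi\not\subset\cB_L^\xi$), while the exchangeability lemma is naturally stated with $0$ included, so I need the elementary observation that on the event $\{\log Z_L^\go(\mathcal A_L^\xi)\le \max_{|i|\le \log L}\log Z_L^{\rho^{-i}\go}(\mathcal A_L^\xi)\}$ one automatically has $\log Z_L^\go(\mathcal A_L^\xi)\le \max_{0<|i|\le\log L}\log Z_L^{\rho^{-i}\go}(\mathcal A_L^\xi)$, because if the maximum is attained at $i=0$ then the left side equals that maximum and is a fortiori $\le$ the max over the nonzero indices (the index set having at least two elements). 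Everything else is a union bound over two events of small probability, so there is no real analytic content left once the two preceding lemmas are granted.
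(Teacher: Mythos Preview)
Your overall approach is correct and is exactly what the paper intends: combine the tubular-estimate lemma with the exchangeability lemma via a union bound, using Lemma \ref{troto} to guarantee the hypothesis of the first lemma. This mirrors the derivation of Proposition \ref{preswe} from Lemmas \ref{milor} and \ref{rotatruc}.

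However, your ``subtlety'' paragraph contains a logical slip. You write that on the event $\{\log Z_L^\go(\mathcal A_L^\xi)\le \max_{|i|\le \log L}\log Z_L^{\rho^{-i}\go}(\mathcal A_L^\xi)\}$ one automatically has $\log Z_L^\go(\mathcal A_L^\xi)\le \max_{i\ne 0}\log Z_L^{\rho^{-i}\go}(\mathcal A_L^\xi)$, reasoning that ``if the maximum is attained at $i=0$ then the left side equals that maximum and is a fortiori $\le$ the max over the nonzero indices.'' That implication is false: if the maximum over all $i$ is \emph{uniquely} attained at $i=0$, then $\log Z_L^\go(\mathcal A_L^\xi)$ is strictly larger than the max over $i\ne 0$. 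In fact the event you wrote down (with $i=0$ included in the max) is the whole probability space, since $\rho^0\go=\go$; so the lemma as literally written in the paper is vacuous and is clearly a typo for the version with $i\in\{-\log L,\dots,\log L\}\setminus\{0\}$, exactly as in Lemma \ref{rotatruc}. Once you use the exchangeability lemma in its intended form (max over $i\ne 0$), the index-set bookkeeping disappears entirely and the chaining is immediate. So drop that paragraph and simply cite the lemma with $i\ne 0$; the rest of your proof stands.
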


The rest  of the proof being exactly similar to the point-to-plane case we survey it very briefly.
We consider $\tilde \go$ constructed just as in Section \ref{addt} but with the definiton of $\bar C^\xi$ replaced by \eqref{barccc}
(here is is important to notice that our choice for $\xi$ implies $(d-1-\alpha)\xi+1>0$).
Obviously  Lemma \ref{mezchange} is still valid with this modifications. Then proof of Proposition \ref{plez} can be adapted without  difficulties and one gets that, for every $\gep>0$,
with probability tending to one, 
\begin{equation}
 \log Z_L^{\tilde \go}(\mathcal A_L^\xi)\le \log Z_L^{\go}(\mathcal A_L^\xi)-L^{\frac{(d+1-\alpha-2\gamma)\xi+1}{2}-\gep}\\
\end{equation}
while $ \log Z_L^{\tilde \go}(\mathcal B_L^\xi)=\log Z_L^{\go}(\mathcal B_L^\xi)$.
This together with Proposition \ref{arabiata} implies
\begin{equation}
\log Z_L^{\tilde \go}(\mathcal B_L^\xi)\ge \log Z_L^{\tilde \go}(\mathcal A_L^\xi)-C' (\log L^2)L^\xi +L^{\frac{(d+1-\alpha-2\gamma)\xi+1}{2}-\gep},
\end{equation}
With our choice of $\xi$, for $\gep$ sufficiently small, $L^{\frac{(d+1-\alpha-2\gamma)\xi+1}{2}-\gep}-C' (\log L^2)L^\xi$ tends to infinity,
and that ends the proof.

\end{document}